\newtheorem{Theorem}{Theorem}%[section]
\newtheorem{Definition}{Definition}
\newtheorem{Afirmacao}{Assertion}
\newtheorem{Corollary}{Corollary}
\newtheorem{Claim}{Claim}
\newtheorem{Question}{Question}
\newtheorem{Problem}{Problem}
\def\bc{\operatorname{{\mathbb C}}}
\def\C{\operatorname{{\mathbb C}}}
\def\GL{\operatorname{{GL}}}
\def\PGL{\operatorname{{PGL}}}
\def\Fix{\operatorname{{Fix}}}
\def\fa{\operatorname{{\mathcal F}}}
\def\Aut{\operatorname{{Aut}}}
\def\F{\operatorname{{\mathcal F}}}
\def\deg{\operatorname{{\rm deg}}}
\def\max{\operatorname{{\rm max}}}
\def\ov\bc{\operatorname{\overline{\mathbb{C}}}}
\title{Vector fields and foliations associated to groups of projective automorphisms}
\author{F. Santos and B. Sc\'ardua}
\date{}
\begin{document}

\maketitle

\begin{abstract}
We introduce and give normal forms for (one-dimensional) Riccati
foliations (vector fields) on $\ov \bc \times \bc P(2)$ and $\ov
\bc \times \ov \bc^n$. These are foliations are characterized by
transversality with the generic fiber of the first projection and
we prove they are conjugate {\em in some invariant Zariski open
subset} to the suspension of a group of automorphisms of the
fiber, $\bc P(2)$ or $\ov \bc^n$, this group called {\it global
holonomy}. Our main result states that given a finitely generated
subgroup $G$ of $\Aut(\bc P (2))$, there is a Riccati foliation on
$\ov \bc \times \bc P(2)$ for which the global holonomy is
conjugate to $G$.
\end{abstract}

\tableofcontents

\section{Introduction}
\label{Section:intro}

Foliations transverse to fibrations are among the  simplest
(constructive)  examples of foliated manifolds, once regarded as
suspensions of group of diffeomorphisms (\cite{camacho},
\cite{Godbillon}). Thus one  expects to perform a nice study of
them in the global theoretic aspect. In the complex algebraic
setting, foliations usually exhibit singularities so this
possibility cannot be excluded. Very representative examples of
either situations are given by the class of {\it Riccati
foliations} (\cite{cidinho}) in dimension two. A very interesting
study is performed in \cite{Pan-Sebastiani} and a complete
reference on the two dimensional case is \cite{Pan-Sebastiani2}.
In this paper we study one-dimensional holomorphic foliations with
singularities which are transverse to a given holomorphic
fibration off some exceptional set in a sense that we shall make
precise below. Let us first recall the classical notion.  Let
$\eta=(E, \pi, B, F)$ be a fibre bundle with total space $E$,
fiber $F$, basis $B$ and projection $\pi\colon E \to B$. A
foliation $\fa$ on $E$ is {\it transverse to $\eta$} if: (1) for
each $p \in E,$ the leaf $L_p$ of $\fa$ with $p\in L_p$ is
transverse to the fiber $\pi^{-1}(q),$ $q=\pi(p);$ (2) $\dim(\fa)
+ \dim(F) = \dim(E);$  and (3) for each leaf $L$ of $\fa,$ the
restriction $\pi|_L: L \to B$ is a covering map. According to
Ehresman (\cite{Godbillon})  if the fiber $F$ is compact, then the
conditions (1) and (2) already  imply (3). In the complex setting
all the objects above are holomorphic by hypothesis and several
are the interesting frameworks (see \cite{Scardua5} for the
complex hyperbolic case). Under the presence of singularities, a
weaker notion must be introduced. We shall say that $\fa$ is {\it
transverse to almost every fiber} of the fibre bundle $\eta$ if
there is an analytic subset $\Lambda(\fa) \subset E$ which is
union of fibers of $\eta$, such that the restriction $\fa_0$ of
$\F$ to $E_0=E\smallsetminus \Lambda$ is transverse to the natural
subbundle $\eta_0$ of $\eta$ having $E_0$ as total space. If
$\Lambda(\fa)$ is minimal with this property then $\Lambda(\fa)$
is called the {\it exceptional set} of $\fa$. By a {\it Riccati
foliation} we mean a foliation $\fa$ as above, for which the
exceptional set $\Lambda(\fa)$ is $\fa$-invariant. In particular
we shall consider the {\it global holonomy} of $\fa$ as the global
holonomy of the restriction $\fa_0$ on $E_0=E\setminus
\Lambda(\fa)$. In the classical situation of Riccati foliations in
$\ov \bc \times \ov  \bc$, the global holonomy is a finitely
generated group of M\"obius transformations, {\it i.e.}, a
finitely generated subgroup of $PSL(2,\bc)$. Using the well-known
classification of M\"obius maps by the set of fixed points (see
Beardon \cite{Beardon}), Lins Neto is able to prove
(\cite{cidinho}) that given a finitely generated subgroup
$G<PSL(2,\bc)$ there is a Riccati foliation in $\ov\bc\times
\ov\bc$ for which the global holonomy is conjugated to $G$.
Similarly, in this work for the case of Riccati foliations on
$\ov\bc \times \bc P(2)$, the study of the global holonomy relies
on the classification of holomorphic diffeomorphisms of $\bc P(2)$
by the set of fixed points. This is the content of
Theorem~\ref{Theorem:ClassificacaoBiholomorfismoEspacoProjetivo}
which  applies  to the  problem of construction of foliations on
$\ov\bc \times \bc P(2)$ with given global holonomy group:

\begin{Theorem}[Synthesis theorem]\label{Theorem:deRealizacao}
    Let $x_0, x_1, \dots, x_k \in \ov\bc$ be points. Let $f_1, \dots,
f_k \in \Aut(\bc P(2))$ be biholomorphisms. Then there is a
Riccati foliation $\fa$ on $\ov\bc \times \bc P(2)$ such that the
invariant fibers of $\fa$ are $\{x_0\} \times \bc P(2), \dots,
\{x_k\} \times \bc P(2)$ and  the global holonomy of $\fa$ is
conjugate to the subgroup of $\Aut(\bc P(2))$ generated by $f_1,
\dots, f_k.$
\end{Theorem}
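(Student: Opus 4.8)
The plan is to realize the group $G = \langle f_1, \ldots, f_k\rangle$ as the monodromy of a meromorphic linear system on $\ov\bc$ acting on the homogeneous coordinates of the fibre, and then to read off the induced foliation on $\ov\bc \times \bc P(2)$ by projectivization. Concretely, set $B = \ov\bc \setminus \{x_0, \ldots, x_k\}$; since a sphere with $k+1$ punctures has free fundamental group of rank $k$, I may take loops $\gamma_1, \ldots, \gamma_k$ encircling $x_1, \ldots, x_k$ as a free basis of $\pi_1(B)$, the loop $\gamma_0$ around $x_0$ then being $(\gamma_1 \cdots \gamma_k)^{-1}$. I define a representation $\rho\colon \pi_1(B) \to \Aut(\bc P(2)) = \PGL(3,\bc)$ by $\gamma_i \mapsto f_i$; freeness means there is no relation to verify, so $\rho$ is well defined and its image is exactly $G$.

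First I would pass from $\PGL(3,\bc)$ to $\GL(3,\bc)$. Each $f_i$ lifts to some $\tilde f_i \in \GL(3,\bc)$, unique up to a scalar, and using the normal forms supplied by Theorem~\ref{Theorem:ClassificacaoBiholomorfismoEspacoProjetivo} I may present each lift in Jordan form and choose a logarithm, i.e.\ a matrix $A_i$ with $\exp(2\pi i A_i) = \tilde f_i$. The local model $\dot Z = \frac{A_i}{x - x_i} Z$ on a disc around $x_i$ is then a linear system with monodromy $\tilde f_i$; after projectivizing the fibre $\bc^3 \to \bc P(2)$ it yields a Riccati foliation transverse to every fibre but $\{x_i\}\times \bc P(2)$ and with local holonomy exactly $f_i$. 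This is where the classification by fixed-point sets does the real work, since it guarantees that every element of $\Aut(\bc P(2))$ actually arises this way.

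Next I would assemble these local models into one global object. The representation $\rho$ defines a flat $\bc P(2)$-bundle over $B$ carrying a foliation $\F$ transverse to the fibres with global holonomy $G$; since $B$ is an open Riemann surface, hence Stein, the Oka--Grauert principle makes this bundle holomorphically trivial, so that foliation may be viewed as a Riccati foliation on $B \times \bc P(2) \subset \ov\bc \times \bc P(2)$. It remains to extend it across the punctures so that the total space is the trivial bundle $\ov\bc \times \bc P(2)$ and the only new invariant fibres are the $\{x_i\}\times \bc P(2)$. Equivalently, I seek a rational system $\dot Z = A(x) Z$ with poles contained in $\{x_0, \ldots, x_k\}$, defined on the trivial rank-three bundle, whose monodromy is $\tilde\rho\colon \gamma_i \mapsto \tilde f_i$; the Fuchsian ansatz $A(x) = \sum_{i=1}^{k} \frac{A_i}{x - x_i}$ is the natural first attempt, with (placing $x_0$ at $\infty$ by a coordinate change on $\ov\bc$) residue $-\sum_i A_i$ and holonomy $(f_1\cdots f_k)^{-1}$ at $x_0$, consistent with $\gamma_0$.

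The main obstacle is precisely this last realization: matching the local monodromies $\tilde f_i$ with a single global fundamental matrix on the trivial bundle is the Riemann--Hilbert problem, and by Bolibruch's counterexamples a Fuchsian system with poles exactly at the prescribed points need not exist for every representation. I would circumvent this in two ways. When $\rho$ is irreducible I invoke the Bolibruch--Kostov theorem to obtain a genuine Fuchsian realization; in general I use the scalar freedom in the lifts $\tilde f_i$ together with the option of allowing one singular point to be non-Fuchsian (a higher-order pole), which by the Birkhoff standard-form argument always produces a rational system on the trivial bundle with the prescribed monodromy and poles in $\{x_0,\ldots,x_k\}$. Since a Riccati foliation only requires its exceptional set to be $\F$-invariant and places no restriction on the pole order, such an irregular singularity is harmless: it still yields the invariant fibre $\{x_i\}\times \bc P(2)$ and no other. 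Projectivizing the resulting global system gives a foliation on $\ov\bc \times \bc P(2)$ whose exceptional set is exactly $\bigcup_{i=0}^{k} \{x_i\}\times \bc P(2)$ and whose global holonomy, being the projectivized monodromy group, is conjugate to $G = \langle f_1,\ldots,f_k\rangle$, as required.
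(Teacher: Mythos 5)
Your proposal is correct in outline but takes a genuinely different route from the paper's. The paper argues by an explicit gluing construction in the style of Lins Neto: it covers $\ov\bc$ by disks $D_j$ around the $x_j$, tubes $A_j$ joining $D_0$ to each $D_j$, and an outer set $V$; it places the product foliation over the tubes and over $V$, and over each $D_j\times\C^2$ it writes down, case by case according to the normal forms of Theorem~\ref{Theorem:ClassificacaoBiholomorfismoEspacoProjetivo}, an explicit linear vector field $(x-x_j)\frac{\partial}{\partial x}+A_j\cdot(u_j,v_j)$ with $\exp(2\pi\sqrt{-1}\,A_j)$ projectivizing to $f_j$, so that the holonomy around $x_j$ is exactly $f_j$; the pieces are then glued by explicit multivalued transition maps of the form $u_j=U_j\exp\bigl(\alpha'_j\log(\cdot)\bigr)$, and the final gluing over the outer annulus uses that the total holonomy $f_0f_1\cdots f_k$ is the identity. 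This is elementary and self-contained, though it leaves the holomorphic triviality of the resulting $\bc P(2)$-bundle somewhat implicit. You instead reduce everything to realizing the lifted representation $\gamma_i\mapsto\tilde f_i$ as the monodromy of a rational rank-three system on the trivial bundle with poles in $\{x_0,\dots,x_k\}$, i.e.\ to the Riemann--Hilbert problem; you correctly flag that the naive Fuchsian ansatz can fail (Bolibruch) and correctly fall back on the always-solvable regular-singular version (Plemelj/R\"ohrl: Deligne extension, Birkhoff--Grothendieck splitting, and a meromorphic twist concentrated at one singular point), noting that a higher-order pole still yields an invariant fiber and hence a Riccati foliation in the paper's sense. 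Your route is shorter, gets the triviality of the total space for free, and proves slightly more (the foliation is the projectivization of a global linear system), at the cost of invoking substantial external machinery; it also makes Theorem~\ref{Theorem:ClassificacaoBiholomorfismoEspacoProjetivo} inessential, since every element of $\GL(3,\bc)$ has a logarithm, whereas the paper uses that classification to enumerate its local models. Two small points to tidy: ensure a pole is actually placed at every $x_i$ (inserting an apparent singularity if some $f_i$ is the identity) so that all $k+1$ prescribed fibers are invariant, and name the fallback result as Plemelj/R\"ohrl rather than Birkhoff standard form, which properly refers to irregular singularities.
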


As mentioned above, our basic motivation comes from the classical
Riccati foliations in dimension two, {\it i.e.},  Riccati
foliations on $\ov\bc \times \ov\bc$, such foliations being given
in affine coordinates by polynomial vector fields of the form $
X\left( x,y \right) = p\left(x\right)\frac{\partial}{\partial x} +
\Bigl( a\left(x\right)y^2 + b\left(x\right)y + c\left(x\right)
\Bigr)\frac{\partial}{\partial y}$. In this Riccati case the fiber
is a (compact) rational Riemman sphere and the exceptional set
$\Lambda\subset {\ov\bc} \times {\ov\bc}$ is a finite union of
vertical projective lines $\{x\}\times \ov\bc$ and is invariant by
the foliation. In this paper we shall mainly work with singular
holomorphic foliations $\F$ on $\ov\bc \times M$ where $M =
\ov\bc^n$ or $M = \C P(2)$, which are transverse to almost every
fiber of $\eta,$ with projection $ \pi:\ov\bc \times M \to
\ov\bc,$ $(x,y) \stackrel{\pi}{\longmapsto} x$. For the case
$M=\ov\bc^n$ these foliations have a natural normal form like in
the Riccati case  as follows:

\begin{Theorem}\label{Theorem:ClassificacaoRiccatiPotenciasEsferaRiemann}
Let $\mathcal{F}$ be a singular holomorphic foliation on $\ov\bc
\times \ov\bc^n$ given by a polynomial vector field $X$ in affine
coordinates  $(x,y) \in \C \times \C^n.$ Suppose $\mathcal{F}$ is
transverse to almost every fiber of the bundle $\eta$ where $\pi:
\ov\bc \times \ov\bc^n \to \ov\bc,$ $\pi(z_1,z_2)= z_1$ is the
projection of $\eta.$ Then $\fa$ is a Riccati foliation and $
X(x,y)=p(x)\frac{\partial}{\partial x}+(q_{1,2}(x) y_1^2 +
q_{1,1}(x) y_1 + q_{1,0}(x))\frac{\partial}{\partial y_1}
+\cdots+(q_{n,2}(x)y_n^2 + q_{n,1}(x) y_n + q_{n,0}(x))
\frac{\partial}{\partial y_n}.$
\end{Theorem}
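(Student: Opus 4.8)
The plan is to extract the normal form from the given polynomial vector field
$X = p\,\partial_x + \sum_{i=1}^{n} Q_i\,\partial_{y_i}$ on $\C\times\C^n$ (so $p,Q_1,\dots,Q_n\in\C[x,y_1,\dots,y_n]$) by imposing two transversality requirements: transversality to the generic affine fiber, and the fact that $\fa$ extends as a holomorphic foliation across the divisor at infinity of $\ov\bc^n=(\ov\bc)^n$. I may assume at the outset that $p,Q_1,\dots,Q_n$ have no common polynomial factor; dividing one out changes neither $\fa$ nor the polynomial nature of $X$, and after this reduction $\mathrm{Sing}(\fa)=\{p=Q_1=\cdots=Q_n=0\}$ has codimension $\ge 2$ in $\C\times\C^n$.

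First I would prove that $p=p(x)$ depends only on $x$. A fiber $\{x=c\}$ is tangent to $\partial_{y_1},\dots,\partial_{y_n}$, so at a regular point $(c,y_0)$ transversality of $\fa$ to that fiber is equivalent to $p(c,y_0)\neq 0$. Hence the hypersurface $\{p=0\}$ can meet a fiber to which $\fa$ is transverse only along $\mathrm{Sing}(\fa)$; since the exceptional set $\Lambda(\fa)$ is a union of finitely many fibers, this gives $\{p=0\}\subset\Lambda(\fa)\cup\mathrm{Sing}(\fa)$. As $\{p=0\}$ is of pure codimension $1$ while $\mathrm{Sing}(\fa)$ has codimension $\ge 2$, every codimension-one component of $\{p=0\}$ is a vertical fiber $\{x=c_j\}$; therefore $p$ is a nonzero constant times a product of factors $x-c_j$, i.e. $p=p(x)$.

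The crux is the behaviour at infinity. Fixing $i$, I pass to the chart $w_i=1/y_i$ (keeping $x$ and the $y_j$ with $j\neq i$), in which $\partial_{y_i}=-w_i^2\,\partial_{w_i}$. Writing $d_j=\deg_{y_i}Q_j$, the transformed field has a pole of order $d_j$ in its $\partial_{y_j}$-slot for $j\neq i$ and of order $d_i-2$ in its $\partial_{w_i}$-slot, so that $\widetilde X:=w_i^{N}X$ with
\[
N=\max\Bigl(\max_{j\neq i}d_j,\ d_i-2,\ 0\Bigr)
\]
is holomorphic along $\{w_i=0\}$ and defines $\fa$ there. Its $\partial_x$-coefficient is $w_i^{N}\,p(x)$. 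At a generic point $(c,\hat y_i,0)$ of the face $\{y_i=\infty\}$ (with $c$ and the remaining coordinates $\hat y_i$ generic), transversality of $\fa$ to the fiber $\{x=c\}$ forces this $\partial_x$-coefficient to be nonzero, hence $N=0$: if instead $N\ge 1$ then $w_i^N p(x)$ vanishes on $\{w_i=0\}$, while the coefficient of $y_i^{N}$ (in some $Q_{j_0}$ with $j_0\neq i$, or of $y_i^{N+2}$ in $Q_i$) survives at $w_i=0$ as a polynomial in $(x,\hat y_i)$ that is nonzero for generic arguments; thus $\widetilde X(c,\hat y_i,0)\neq 0$ is a regular point at which $\widetilde X$ is tangent to the fiber, contradicting transversality on a generic fiber.

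From $N=0$ I read off $\deg_{y_i}Q_j=0$ for all $j\neq i$ and $\deg_{y_i}Q_i\le 2$. Letting $i$ range over $1,\dots,n$, the first statement shows $Q_j$ is independent of every $y_i$ with $i\neq j$, so $Q_j=Q_j(x,y_j)$, and the second gives $\deg_{y_j}Q_j\le 2$; this is precisely the asserted form $Q_j=q_{j,2}(x)y_j^2+q_{j,1}(x)y_j+q_{j,0}(x)$. Finally, on each fiber $\{x=c_j\}$ with $p(c_j)=0$ the $\partial_x$-coefficient vanishes identically, so $X$ is everywhere tangent to that fiber; these fibers, together with $\{x=\infty\}$ (treated by the symmetric chart $u=1/x$), form an invariant exceptional set, so $\fa$ is a Riccati foliation. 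The step I expect to be the main obstacle is the analysis at infinity: correctly clearing the poles in each chart $w_i=1/y_i$ and checking that whenever $N\ge 1$ the cleared field $\widetilde X$ is simultaneously nonzero and tangent to the generic fiber, since it is this interplay between the pole order and the vanishing of the $\partial_x$-coefficient that turns a failure of the Riccati bound into a failure of transversality.
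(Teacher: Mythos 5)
Your proposal is correct and follows essentially the same route as the paper: pass to the chart $w_i=1/y_i$, clear the poles, and observe that the $\partial_x$-coefficient of the cleared field vanishes along $\{w_i=0\}$ while the field itself does not, so the foliation would be tangent to generic fibers at points at infinity. Your single exponent $N$ replaces the paper's case-by-case bookkeeping, and your divisor argument for $p=p(x)$ differs only cosmetically from the paper's observation that a nowhere-vanishing polynomial on $\C^n$ is constant, but the substance is identical.
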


For the case $M=\bc P(2)$ the classification does not follow an
already established model. Indeed, owing to Okamoto
\cite{okamoto}, given $a, b \in \bc$ the vector field $ X(x,y,z)=
\frac{\partial}{\partial x}+ (z-y^2)\frac{\partial}{\partial y} -
(a+by+yz)\frac{\partial}{\partial z}$ induces a foliation in
$\ov\bc \times \bc P(2)$ which is transverse to every fiber
$\{x\}\times \bc P(2)$ except for $x=\infty$. Our normal form,
 englobing   this class of examples,  is as follows:

\begin{Theorem}\label{Theorem:ClassificacaoRiccatiEspacoProjetivo}
Let $\mathcal{F}$ be a singular holomorphic foliation on $\ov\bc
\times \bc P(2)$ given by a polynomial vector field $X$ in affine
coordinates $(x,y,z)$ on $\C \times \C^2.$ If $\mathcal{F}$ is
transverse to almost every fiber of the fibre bundle $\eta$ where
$\pi: \ov\bc \times \bc P(2) \to \ov\bc,$ $\pi(z_1,z_2)=z_1$ is
the projection of $\eta,$ then $\fa$ is a Riccati foliation and
$X(x,y,z)= p(x)\frac{\partial}{\partial x}+
Q(x,y,z)\frac{\partial}{\partial y} +
R(x,y,z)\frac{\partial}{\partial z};$ in affine coordinates
$(x,y,z) \in \C \times \C^2 \hookrightarrow \ov\bc \times \bc
P(2)$ where
$$
Q(x,y,z)=A(x)+B(x)y+C(x)z+D(x)yz+E(x)y^2,
$$
$$
R(x,y,z)=a(x)+b(x)y+c(x)z+E(x)yz+D(x)y^2,
$$
and $p, a, b, c, A, B, C, D, E\in \bc[x].$
\end{Theorem}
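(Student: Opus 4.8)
The plan is to read the normal form off two transversality requirements: transversality to a generic \emph{affine} fibre will pin down the base component, while transversality to the \emph{compact} fibre $\bc P(2)$, especially along its line at infinity, will force the fibre components to be infinitesimal projective transformations. First I would write $X=P\,\partial_x+Q\,\partial_y+R\,\partial_z$ with $P,Q,R\in\bc[x,y,z]$ and fix $x_0\in\bc$ off the finite set of $x$--values carried by $\Lambda(\fa)$. On $E_0=(\ov\bc\times\bc P(2))\smallsetminus\Lambda(\fa)$ the foliation $\fa_0$ is regular and transverse to the fibration, hence nowhere tangent to $\{x_0\}\times\bc^2$; this says that $P(x_0,\cdot,\cdot)$ has empty zero set in $\bc^2$, so it is a nonzero constant. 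Letting $x_0$ range over a cofinite set and using that $P$ is a polynomial gives $P(x,y,z)=p(x)$ with $p\in\bc[x]$. This already yields the Riccati property: on any fibre $\{x_1\}\times\bc P(2)$ with $p(x_1)=0$ the field $X$ is tangent to the fibre, so each such fibre is invariant, and together with $\{\infty\}\times\bc P(2)$ these are the only candidates making up $\Lambda(\fa)$.

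Next I would use compactness of $\bc P(2)$. By Ehresmann's theorem the holonomy (parallel transport) of the transverse foliation $\fa_0$ consists of biholomorphisms of the fibre, hence of elements of $\Aut(\bc P(2))=\PGL(3,\bc)$, the classical fact underlying Theorem~\ref{Theorem:ClassificacaoBiholomorfismoEspacoProjetivo}. Consequently the $x$--dependent fibre field $\frac1{p(x)}(Q\,\partial_y+R\,\partial_z)$ integrates, between nearby fibres, to a curve in $\PGL(3,\bc)$; differentiating at a fixed $x$, its value is an infinitesimal projective automorphism, i.e.\ a \emph{global} holomorphic vector field on $\bc P(2)$. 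In an affine chart such a field has affine--linear components together with a quadratic part of the radial shape $\big(E(x)y+D(x)z\big)\big(y\,\partial_y+z\,\partial_z\big)$. Multiplying through by $p(x)$ and collecting terms returns exactly $Q=A+By+Cz+Dyz+Ey^2$ and $R=a+by+cz+Eyz+Dz^2$, with the quadratic coefficients $D,E$ shared between $Q$ and $R$; since $Q,R,p$ are polynomials, so are all of $A,B,C,D,E,a,b,c$. It then remains to confirm holomorphicity across $\{\infty\}\times\bc P(2)$ via $x\mapsto 1/x$, which only appends $\infty$ to the finite list of invariant fibres.

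The hard part is the quadratic coupling, equivalently extendability across the fibre's line at infinity. The concrete way I would verify it, independent of the holonomy language, is to pass to the fibre chart $u=1/y$, $v=z/y$, in which a direct computation gives $X=p\,\partial_x-u^2Q\,\partial_u+u(R-vQ)\,\partial_v$ (with $y=1/u$, $z=v/u$). Requiring that the primitive holomorphic representative of $\fa$ still be transverse to $\{x=\mathrm{const}\}$ along $u=0$ forces the $u^{-1}$--terms of $u(R-vQ)$ to cancel; this cancellation is precisely $\deg_{(y,z)}Q,\deg_{(y,z)}R\le 2$ together with the relations identifying the $y^2$-- and $yz$--coefficients of $Q$ with the $yz$-- and $z^2$--coefficients of $R$, i.e.\ the coupling through $D,E$. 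The symmetric chart $s=1/z$, $t=y/z$ yields the same relations and covers the remaining point of the line at infinity, so no further constraints appear. Checking this cancellation carefully, and that it is exactly equivalent to the fibre generator being projective, is the main obstacle of the argument.
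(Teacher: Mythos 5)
Your proposal is correct in substance, and its computational core coincides with the paper's method, but your primary argument is a genuinely different and more conceptual route. The paper proceeds entirely by the chart computation you describe in your last paragraph: after establishing $P=p(x)$ exactly as you do, it passes to $u=1/y$, $v=z/y$ (and $t=y/z$, $s=1/z$), introduces the total degrees $\alpha=\deg_{(y,z)}Q$ and $\beta=\deg_{(y,z)}R$, proves $\beta\le\alpha\le 2$ by exhibiting a tangency or singularity on the line $u=0$ of an otherwise transverse fiber whenever a genuine pole survives, and then runs through the six cases $(\alpha,\beta)$, eliminating $(2,0)$ and $(2,1)$ and extracting from $(2,2)$ precisely the coupling relations you state (in the paper's notation, $S=\tilde R-v\tilde Q$ is divisible by $u$ iff $e=0$, $d=E$, $f=D$, $F=0$). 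So what you defer as ``the main obstacle'' is in fact the entirety of the paper's argument. Your Ehresmann/holonomy argument --- that over a fully transverse compact fiber the transport maps form a holomorphic curve in $\Aut(\bc P(2))=\PGL(3,\bc)$, whose logarithmic derivative is a global holomorphic vector field on $\bc P(2)$, hence affine--linear plus a radial quadratic part $(E y+D z)(y\partial_y+z\partial_z)$ in the chart --- is not in the paper, and it buys a cleaner explanation of \emph{why} the quadratic parts of $Q$ and $R$ must couple: they are forced to assemble into an element of $H^0(\bc P(2),\Theta)\cong\mathfrak{pgl}(3,\bc)$. It does rely on the standard identification of global vector fields on $\bc P(2)$, which you assert rather than prove, but that is classical. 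One further point in your favor: your conclusion $R=a+by+cz+Eyz+Dz^2$ is what the paper's own case $(\alpha,\beta)=(2,2)$ actually yields ($d=E$, $f=D$, $e=0$) and what the infinitesimal-$\PGL(3)$ description demands (cf.\ the Okamoto example); the $Dy^2$ appearing in the printed statement of the theorem is evidently a typo for $Dz^2$.
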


\noindent{\bf Acknowledgement}: We are  grateful to R. S. Mol,
Julio Canille and L. G. Mendes for reading the original manuscript
and various valuable suggestions.

\section{Classification of automorphisms of $\bc P(2)$}                               % Seção 4

The group of automorphisms of $\C P(n)$ is induced by the general
linear group, that is, $\Aut(\C P(n)) \cong \PGL(n+1,\mathbb{C})$
(\cite{griffiths}),  it identifies an isomorphism $T\colon
\bc^{n+1} \to \bc^{n+1}$ with the biholomorphism of   the complex
projective space $[T]$ defined by: if  $r \subset
\mathbb{C}^{n+1}$ is a complex line contains $0 \in
\mathbb{C}^{n+1}$, then $s = T(r)$ is a complex line contains $0
\in \mathbb{C}^{n+1}$ and we consider $[T]: \mathbb{C}P(n)
\rightarrow \mathbb{C}P(n)$ given by $[T](r \smallsetminus \{0\})
= s \smallsetminus \{0\}.$

Aiming the study of Riccati foliations on $\ov\bc \times \bc P(2)$
through the global holonomy we perform  the classification of
holomorphic diffeomorphisms of $\bc P(2)$ by the set of fixed
points. This the content of the following result:

\begin{Theorem}\label{Theorem:ClassificacaoBiholomorfismoEspacoProjetivo}
If $f: \bc P(2) \rightarrow \bc P(2)$ is a biholomorphism and
$\Sigma(f)$ denotes its set of fixed points then we have the
following six possibilities:

\begin{itemize}

\item[{\rm 1.}] $\Sigma(f)$ has pure dimension zero and is a set
of one, two or three points.

\item[{\rm 2.}] $\Sigma(f)$ has pure dimension one and consists of
two projective lines.

\item[{\rm 3.}] $\Sigma(f)$  consists of one point and two
projective lines.

\item[{\rm 4.}] $\Sigma(f)$ has dimension two and $\Sigma(f)=\bc
P(2)$.

\end{itemize}

In particular, $f$ is conjugate in $\Aut(\bc P(2))$ to a map $g
\in\Aut(\mathbb{C}P(2))$ of the form $g(x:y:z)=(\lambda_0 x+y:
\lambda_0 y+z: \lambda_0 z), \, g(x:y:z)=(\lambda_0 x+y: \lambda_0
y: \lambda_1 z), \, g(x:y:z)=(\lambda_0 x: \lambda_1 y: \lambda_2
z), \, g(x:y:z)=(\lambda_0 x: \lambda_0 y: \lambda_1 z), \,
g(x:y:z)=(\lambda_0 x+y: \lambda_0 y: \lambda_0 z), \,
g(x:y:z)=(x:y:z)$, \,  where $\lambda_0, \lambda_1, \lambda_2 \in
\mathbb{C} \smallsetminus \{ 0 \},$ respectively.
\end{Theorem}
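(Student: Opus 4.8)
The plan is to exploit the isomorphism $\Aut(\bc P(2)) \cong \PGL(3,\bc)$ recalled above and translate the problem into linear algebra. First I would write $f = [T]$ for some $T \in \GL(3,\bc)$, and observe that a point $[v] \in \bc P(2)$ is fixed by $f$ precisely when its representative $v \in \bc^3 \setminus \{0\}$ satisfies $Tv = \lambda v$ for some $\lambda \in \bc \setminus\{0\}$, that is, precisely when $v$ is an eigenvector of $T$. Consequently $\Sigma(f) = \bigcup_{\lambda} \mathbb{P}(E_\lambda)$, where $E_\lambda = \ker(T - \lambda\,\mathrm{Id})$ ranges over the eigenspaces of $T$. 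Since eigenspaces attached to distinct eigenvalues meet only in $0$, the pieces $\mathbb{P}(E_\lambda)$ are pairwise disjoint projective linear subspaces of $\bc P(2)$, so the whole classification is encoded in the dimensions of the $E_\lambda$.

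Next I would reduce to normal forms. Conjugating $T$ by $S \in \GL(3,\bc)$ corresponds exactly to conjugating $f = [T]$ by $[S] \in \Aut(\bc P(2))$, while replacing $T$ by $cT$ with $c \in \bc\setminus\{0\}$ leaves $[T]$ unchanged; hence the conjugacy class of $f$ in $\Aut(\bc P(2))$ is governed by the Jordan canonical form of $T$ taken up to an overall scalar. I would then enumerate the six possible Jordan types of an invertible $3\times 3$ matrix: the single block $J_3(\lambda_0)$; the split $J_2(\lambda_0)\oplus J_1(\lambda_1)$ with $\lambda_0 \neq \lambda_1$; the diagonalizable forms $\mathrm{diag}(\lambda_0,\lambda_1,\lambda_2)$ with distinct entries and $\mathrm{diag}(\lambda_0,\lambda_0,\lambda_1)$ with $\lambda_0 \neq \lambda_1$; the form $J_2(\lambda_0)\oplus J_1(\lambda_0)$; and the scalar matrix $\lambda_0\,\mathrm{Id}$. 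Reading each of these Jordan matrices as a map $(x:y:z)\mapsto\cdots$ yields exactly the six normal forms $g$ displayed in the statement.

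It then remains to compute $\Sigma(g)$ for each normal form by determining its eigenspaces $E_\lambda$ and projectivizing. The governing principle is a short dimension count in $\bc^3$: a projective line contained in $\Sigma(f)$ forces a full $2$-dimensional eigenspace, and two \emph{distinct} eigenvalues cannot both carry $2$-dimensional eigenspaces inside $\bc^3$. Case by case one obtains: a single fixed point for $J_3(\lambda_0)$, two fixed points for $J_2(\lambda_0)\oplus J_1(\lambda_1)$, three fixed points for the diagonal form with distinct entries; a projective line together with an isolated point for $\mathrm{diag}(\lambda_0,\lambda_0,\lambda_1)$; a projective line for $J_2(\lambda_0)\oplus J_1(\lambda_0)$; and all of $\bc P(2)$ for the scalar matrix. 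Collecting these according to the dimension and incidence of the pieces $\mathbb{P}(E_\lambda)$ produces the tabulated possibilities for $\Sigma(f)$ and matches them with the six normal forms.

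The conceptual content here is routine linear algebra, so I do not expect a deep obstacle; the main labour is bookkeeping, namely verifying the dimension of every eigenspace in each Jordan type and assembling $\Sigma(f)$, together with confirming that reduction to Jordan form up to a common scalar is exactly compatible with $\PGL$-conjugacy. The subtlest point to state carefully is the geometric incidence of the components of $\Sigma(f)$, that is, which isolated fixed points do or do not lie on the fixed line; this is controlled entirely by the eigenspace dimensions and the trivial-intersection property recorded in the first step.
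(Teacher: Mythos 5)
Your proposal is correct and, at the level of strategy, matches the paper: both reduce to the Jordan canonical form of a matrix representative $T$ of $f=[T]$ and enumerate the six Jordan types of an invertible $3\times 3$ matrix up to scalar. The difference is in how the fixed-point sets are then computed. The paper works chart by chart, writing the induced affine map $G$ on each of $E_0,E_1,E_2$ and solving the fixed-point system in coordinates; you instead use the single observation that $[v]$ is fixed by $[T]$ iff $v$ is an eigenvector, so that $\Sigma(f)=\bigcup_\lambda \mathbb{P}\bigl(\ker(T-\lambda\,\mathrm{Id})\bigr)$ is a disjoint union of projectivized eigenspaces. This is shorter, and it buys something substantive: it makes the global structure of $\Sigma(f)$ visible at once, whereas the chart computations only exhibit $\Sigma(f)$ piecewise.

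Indeed, your eigenspace argument exposes a miscount in the paper. Since two distinct eigenvalues of $T\in\GL(3,\bc)$ cannot both have $2$-dimensional eigenspaces, $\Sigma(f)$ can never contain two distinct projective lines, contrary to items 2 and 3 of the statement. In the paper's Case (ii.1) the sets $\Fix_2(g)=\{(x:1:0)\}$ and $\Fix_3(g)=\{(1:y:0)\}$ are two affine charts of the \emph{same} projective line $\{z=0\}=\mathbb{P}(E_{\lambda_0})$, so the correct answer there is one line plus the isolated point $(0:0:1)$; likewise in Case (iii.2) the sets $\{(x:0:1)\}$ and $\{(1:0:z)\}$ together form the single line $\{y=0\}$. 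Your tabulation (one point for $J_3(\lambda_0)$, two for $J_2(\lambda_0)\oplus J_1(\lambda_1)$, three for the distinct-diagonal case, a line plus a point for $\mathrm{diag}(\lambda_0,\lambda_0,\lambda_1)$, a single line for $J_2(\lambda_0)\oplus J_1(\lambda_0)$, and all of $\bc P(2)$ for the scalar matrix) is the correct one, and items 2 and 3 of the theorem should read ``one projective line'' and ``one point and one projective line'' respectively. The only point you should still write out explicitly is the compatibility of ``Jordan form up to a common scalar'' with $\PGL$-conjugacy (e.g.\ that rescaling $T$ does not change the list of normal forms after a further diagonal conjugation restores the superdiagonal entries to $1$), but that is routine.
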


\begin{proof}[Proof of
Theorem~\ref{Theorem:ClassificacaoBiholomorfismoEspacoProjetivo}]                   % Subseção 4.1

If $f \in \Aut(\bc P(2)),$ then there is $A = ( a_{ij} )_{3 \times
3} \in \GL(3,\C)$ such that $f=[A],$ that is, $f(x:y:z)=(a_{11} x
+ a_{12} y + a_{13} z : a_{21} x + a_{22} y + a_{23} z : a_{31} x
+ a_{32} y + a_{33} z).$ We use the Jordan canonical forms and
obtain the classification of automorphisms of $\bc P(2)$ by fixed
points. In fact, there are three possibilities for the
characteristic polynomial of $A,$ $p_A(t),$ in $\mathbb{C}[t]$:
\begin{enumerate}
  \item[(i)] $p_A(t)=(t- \lambda_0)(t- \lambda_1)(t- \lambda_2);$
  \item[(ii)] $p_A(t)=(t- \lambda_0)^2(t- \lambda_1);$
  \item[(iii)] $p_A(t)=(t- \lambda_0)^3,$
\end{enumerate}
where  $\lambda_0, \lambda_1, \lambda_2 \in \mathbb{C}
\smallsetminus \{0\}$ are different.

{\bf Case (i).} The minimal polynomial of $A,$ $m_A(t),$ in
$\C[t]$ is $m_A(t) = p_A(t).$ Then there is $P \in \GL(3,\C)$ such
that $A = P^{-1} J P$ where
$$
J= \left[
\begin{array}{ccc}
\lambda_0 & 0 & 0 \\
0 & \lambda_1 & 0 \\
0 & 0 & \lambda_2 \\
\end{array}
\right]
$$
is the Jordan canonical form of $A.$ Therefore $f$ is conjugate to
$[J]$ because $f=[A]=[P^{-1}][J][P].$ We consider $g \equiv [J],$
that is, $g: \bc P(2) \rightarrow \bc P(2)$ defined by
$g(x:y:z)=(\lambda_0 x: \lambda_1 y: \lambda_2 z)$ with
$\lambda_0, \lambda_1, \lambda_2 \in \mathbb{C} \smallsetminus
\{0\}.$ We shall determinate the fixed points of $g.$ First, we
recall that $\bc P(2)$ is a complex manifold defined by the atlas
$\{(E_j, \varphi_j)\}_{j\in \{0,1,2\}}$ where
$$
E_j=\{(z_0:z_1:z_2)\in \bc P(2);\ z_j \neq 0\},
$$
and $\varphi_j:E_j \rightarrow \C^2\,$ is defined by
$\varphi_0(z_0:z_1:z_2)=\left(\frac{z_1}{z_j},\frac{z_2}{z_j}
\right), \varphi_1:E_1 \rightarrow \C^2,\
\varphi_1(z_0:z_1:z_2)=\left( \frac{z_0}{z_1},\frac{z_2}{z_1}
\right), \varphi_2:E_2 \rightarrow \C^2,\ \
\varphi_2(z_0:z_1:z_2)=\left(\frac{z_0}{z_2},\frac{z_1}{z_2}\right).$
Observe that $f$ is conjugate to $g$ so $f$ has the same numbers
of fixed points that $g.$ Now we obtain the points fixed by $g.$
First, we consider the points $(x:y:1) \in \bc P(2).$ In this case
we have $g(x:y:1)=(\lambda_0 x: \lambda_1 y: \lambda_2)$ and the
application $G: \mathbb{C}^2 \rightarrow \mathbb{C}^2$ defined by
$G(x,y)= \left( \frac{\lambda_0}{\lambda_2}x,
\frac{\lambda_1}{\lambda_2}y \right).$ We obtain the following
commutative diagram:
$$
\xymatrix{E_2 \ar[r]^f \ar[d]_{\varphi_2} & E_2 \ar[d]^{\varphi_2}\\
\C^2 \ar[r]_F & \C^2 \\}
$$

Therefore the fixed points of $g \in\Aut(\bc P(2))$ of the form
$(x:y:1)$ are given by the solutions of the following system
$$
\left\{
\begin{array}{ccc}
\frac{\lambda_0}{\lambda_2}x &=& x \\
\frac{\lambda_1}{\lambda_2}y &=& y \\
\end{array}
\right.
$$
and the point $(0,0)$ is this solution so $(0:0:1)$ is a fixed
point by $g.$ By analogy with it we consider the points of the
following form $(x:1:z) \in \bc P(2).$ Now we have
$g(x:1:z)=(\lambda_0x:\lambda_1:\lambda_2z)$ and $G:\C^2 \to \C^2$
defined by $G(x,z)=\left( \frac{\lambda_0}{\lambda_1}x,
\frac{\lambda_2}{\lambda_1}z \right)$ such that the following
diagram is commutative:
$$
\xymatrix{E_1 \ar[r]^f \ar[d]_{\varphi_1} & E_1 \ar[d]^{\varphi_1}\\
\C^2 \ar[r]_F & \C^2 \\}
$$

Notice that the fixed points of $G$ are given by the solutions of
the system
$$
\left\{
\begin{array}{ccc}
x &=& \frac{\lambda_0}{\lambda_1}x \\
z &=& \frac{\lambda_2}{\lambda_1}z \\
\end{array}
\right.
$$
On the other hand this system have the solution $(0,0)$ only.
Therefore $(0:1:0)$ is another fixed point by $g.$ And we consider
the points of the following form $(1:y:z) \in \bc P(2),$ too. We
obtain $g(1:y:z)=(\lambda_0:\lambda_1y:\lambda_2z)$ and $G:
\mathbb{C}^2 \rightarrow \mathbb{C}^2$ defined by $G(y,z)=\left(
\frac{\lambda_1}{\lambda_0}y, \frac{\lambda_2}{\lambda_0}z
\right)$ such that the following diagram is commutative:
$$
\xymatrix{E_0 \ar[r]^f \ar[d]_{\varphi_0} & E_0 \ar[d]^{\varphi_0}\\
\C^2 \ar[r]_F & \C^2 \\}
$$

And the fixed points of $G$ are given by the solutions of the
following system
$$
\left\{
\begin{array}{ccc}
y &=& \frac{\lambda_1}{\lambda_0}y \\
z &=& \frac{\lambda_2}{\lambda_0}z \\
\end{array}
\right.
$$
Now we have the point $(0,0)$ the only solution. Then $(1:0:0)$ is
a fixed point by $g.$ Therefore the fixed points of $g$ are
$$
\Fix(g)=\{ (1:0:0),(0:1:0),(0:0:1) \}.
$$

By analogy with these ideas in the other cases we obtain:

{\bf Case (ii)}. There are two possibilities for the minimal
polynomial of $A,$ $m_A(t),$ in $\C[t]:$
\begin{enumerate}
\item[(ii.1)] $m_A(t)=(t- \lambda_0)(t- \lambda_1);$
\item[(ii.2)] $m_A(t)=(t- \lambda_0)^2(t- \lambda_1)=p_A(t),$
\end{enumerate}
where $\lambda_0, \lambda_1 \in \mathbb{C} \smallsetminus \{0\},$
$\lambda_0 \neq \lambda_1.$ In both of them there is $P \in
\GL(3,\C)$ such that $A=P^{-1}JP$ where $J$ is the Jordan
canonical form of $A.$ Then $f=[A]=[P^{-1}][J][P],$ that is, $f$
is conjugate to $[J].$ Therefore

{\bf Case (ii.1).} In this case we have
$$
J= \left[
\begin{array}{ccc}
\lambda_0 & 0 & 0 \\
0 & \lambda_0 & 0 \\
0 & 0 & \lambda_1 \\
\end{array}
\right].
$$
and then $f$ is conjugate to $g\equiv [J],$ that is,
$g(x:y:z)=(\lambda_0 x : \lambda_0 y : \lambda_1 z).$ Let us study
the fixed points of $g.$ We consider first the points the
following form: $(x:y:1)\in \bc P(2).$ We obtain
$g(x:y:1)=(\lambda_0 x : \lambda_0 y : \lambda_1)$ and
$G:\mathbb{C}^2 \rightarrow \mathbb{C}^2$ defined by
$G(x,y)=\left( \frac{\lambda_0}{\lambda_1}x,
\frac{\lambda_0}{\lambda_1}y \right)$ such that the following
diagram is commutative:
$$
\xymatrix{E_2 \ar[r]^g \ar[d]_{\varphi_2} & E_2 \ar[d]^{\varphi_2}\\
\C^2 \ar[r]_G & \C^2 \\}
$$

Then the fixed points of $G$ are the solutions the following
system
$$
\left\{
\begin{array}{ccc}
\frac{\lambda_0}{\lambda_1}x &=& x \\
\frac{\lambda_0}{\lambda_1}y &=& y \\
\end{array}
\right.
$$
and note that $(0,0)$ is this solution. Therefore $(0:0:1)$ is
fixed point by $g \in \Aut(\bc P(2)).$ Now we consider the points
the following form: $(x:1:z) \in \bc P(2).$ We obtain
$g(x:1:z)=(\lambda_0 x : \lambda_0 : \lambda_1 z)$ and
$G:\mathbb{C}^2 \rightarrow \mathbb{C}^2$ defined by
$G(x,z)=\left( x, \frac{\lambda_1}{\lambda_0}z \right)$ such that
the following diagram is commutative:
$$
\xymatrix{E_1 \ar[r]^g \ar[d]_{\varphi_1} & E_1 \ar[d]^{\varphi_1}\\
\C^2 \ar[r]_G & \C^2 \\}
$$

The fixed points of $G$ are given by the solutions the following
system
$$
\left\{
\begin{array}{ccl}
x &=& x \\
z &=& \frac{\lambda_1}{\lambda_0}z \\
\end{array}
\right.
$$
and we have the following solutions: $\{(x,0)\in \C^2;\ x \in \C
\}.$ Therefore $\Fix_2(g)=\{ (x:1:0) \in \bc P(2);\ x \in
\mathbb{C} \}$ are fixed points of $g.$ At the end we consider the
points of the form $(1:y:z) \in \bc P(2).$ Then we have $g(1:y:z)$
$=(\lambda_0: \lambda_0 y : \lambda_1 z)$ and $G: \mathbb{C}^2
\rightarrow \mathbb{C}^2$ defined by $ g(y,z)=\left( y,
\frac{\lambda_1}{\lambda_0}z \right)$ such that
$$
\xymatrix{E_0 \ar[r]^g \ar[d]_{\varphi_0} & E_0 \ar[d]^{\varphi_0}\\
\C^2 \ar[r]_G & \C^2 \\}
$$
commute. The fixed points of $G$ are the solutions of the
following system
$$
\left\{
\begin{array}{ccc}
y &=& y \\
z &=& \frac{\lambda_1}{\lambda_0}z \\
\end{array}
\right.,
$$
that is, the points $\{(y,0) \in \mathbb{C}^2;\ y \in
\mathbb{C}\}.$ Therefore the points $\Fix_3(g) = \{(1:y:0)\in \bc
P(2) ;\ y \in \mathbb{C}\}.$ are fixed by $g,$ too. Then in this
case the fixed points of $g$ are two projective lines $\Fix_2(g)$
and $\Fix_3(g)$ and one point $(0:0:1) \in \bc P(2).$

{\bf Case (ii.2).} In this case we obtain
$$
J= \left[
\begin{array}{ccc}
\lambda_0 & 1 & 0 \\
0 & \lambda_0 & 0 \\
0 & 0 & \lambda_1 \\
\end{array}
\right]
$$
and $f$ is conjugate by $g = [J],$ that is, $g(x:y:z)=(\lambda_0 x
+ y : \lambda_0 y : \lambda_1 z).$ Let us study of the fixed
points of $g.$ First, we consider the points of the form
$(x:y:1)\in \bc P(2).$ Then we have $g(x:y:1)=(\lambda_0 x + y :
\lambda_0 y : \lambda_1 )$ and $G:\mathbb{C}^2 \rightarrow
\mathbb{C}^2$ defined by $G(x,y)=\left(
\frac{\lambda_0}{\lambda_1} x + \frac{1}{\lambda_1}y ,
\frac{\lambda_0}{\lambda_1}y \right)$ such that the following
diagram is commutative
$$
\xymatrix{E_2 \ar[r]^g \ar[d]_{\varphi_2} & E_2 \ar[d]^{\varphi_2}\\
\C^2 \ar[r]_G & \C^2 \\}
$$

The fixed points of $G$ are the solutions of the following system
$$
\left\{
\begin{array}{ccccc}
\frac{\lambda_0}{\lambda_1}x & + & \frac{1}{\lambda_1}y & = & x \\
& & \frac{\lambda_0}{\lambda_1}y & = & y \\
\end{array}
\right.
$$
and this is the point $(0,0) \in \C^2$ only. Then $(0:0:1) \in \bc
P(2)$ is a fixed point by $g\in\Aut(\bc P(2)).$ On the other hand
we consider the points of the following form $(x:1:z)\in \bc
P(2).$ We obtain $g(x:1:z)=(\lambda_0 x + 1 : \lambda_0 :
\lambda_1 z)$ and $G:\mathbb{C}^2 \rightarrow \mathbb{C}^2$
defined by $G(x,z)=\left( x + \frac{1}{\lambda_0} ,
\frac{\lambda_1}{\lambda_0} z\right)$ such that
$$
\xymatrix{E_1 \ar[r]^g \ar[d]_{\varphi_1} & E_1 \ar[d]^{\varphi_1}\\
\C^2 \ar[r]_G & \C^2 \\}
$$
commute. Notice that the fixed points of  $G$ are the solutions of
the following system
$$
\left\{
\begin{array}{ccccc}
x & = & x & + & \frac{1}{\lambda_0} \\
z & = & \frac{\lambda_1}{\lambda_0}z & & \\
\end{array}
\right.
$$

Observe that there are not solutions of this system, then there
are not fixed points of $g \in\Aut(\bc P(2))$ of the form
$(x:1:z)\in \bc P(2).$ Now we consider points of the form $(1:y:z)
\in \bc P(2).$ We obtain $g(1:y:z)=(\lambda_0 + y : \lambda_0 y :
\lambda_1 z)$ and if $y \neq -\lambda_0,$ then we have
$G:\mathbb{C}^2 \rightarrow \mathbb{C}^2$ defined by
$G(y,z)=\left(\frac{\lambda_0 y}{\lambda_0 + y},\frac{\lambda_1
z}{\lambda_0 + y} \right)$ such that the following diagram is
commutative
$$
\xymatrix{E_0 \ar[r]^g \ar[d]_{\varphi_0} & E_0 \ar[d]^{\varphi_0}\\
\C^2 \ar[r]_G & \C^2 \\}
$$

Now note that if $y = -\lambda_0$ then there are not fixed points
of $g.$ The fixed points of $G$ are given by the solutions of the
system

$$
\left\{
\begin{array}{ccc}
y & = & \frac{\lambda_0 y}{\lambda_0 + y} \\
z & = & \frac{\lambda_1 z}{\lambda_0 + y} \\
\end{array}
\right.
$$
that is, $(0,0)$ is the fixed point by $G.$ Therefore $(1:0:0)$ is
a fixed points of $g$ are $(1:0:0)$ e $(0:0:1).$

{\bf Case (iii).} There are three possibilities for the minimal
polynomial of $A$ in $\C[t]$:
\begin{enumerate}
\item[(iii.1)] $m_A(t)=t-\lambda_0;$
\item[(iii.2)] $m_A(t)=(t-\lambda_0)^2;$
\item[(iii.3)] $m_A(t)=(t-\lambda_0)^3=p_A(t),$
\end{enumerate}
where $\lambda_0 \in \C \smallsetminus \{0\}.$ In all
possibilities there is $P \in \GL(3,\C)$ such that $A=P^{-1} J P$
where $J$ is the Jordan canonical form of $A.$ Then
$f=[J]=[P^{-1}][J][P]$ is conjugate to $[J].$

{\bf Case (iii.1).} In this case we obtain
$$
J= \left[
\begin{array}{ccc}
\lambda_0 & 0 & 0 \\
0 & \lambda_0 & 0 \\
0 & 0 & \lambda_0 \\
\end{array}
\right]
$$
and then $g=[J],$ that is, $g(x:y:z)=(\lambda_0 x : \lambda_0 y :
\lambda_0 z).$ Therefore $g$ is the identity application and all
$\bc P(2)$ are fixed by $g.$

{\bf Case (iii.2)} In this case we have
$$
J= \left[
\begin{array}{ccc}
\lambda_0 & 1 & 0 \\
0 & \lambda_0 & 0 \\
0 & 0 & \lambda_0 \\
\end{array}
\right]
$$
and then $f$ is conjugate to $g(x:y:z)=(\lambda_0 x + y :
\lambda_0 y : \lambda_0 z).$ Let us study the fixed points of $g.$
We consider first the points the following form: $(x:y:1) \in \bc
P(2).$ We obtain $g(x:y:1)=(\lambda_0 x + y : \lambda_0 y :
\lambda_0 )$ and $G:\mathbb{C}^2 \rightarrow \mathbb{C}^2$ defined
by $G(x,y)=\left( x + \frac{1}{\lambda_0}y, y \right)$ such that
$$
\xymatrix{E_2 \ar[r]^g \ar[d]_{\varphi_2} & E_2 \ar[d]^{\varphi_2}\\
\C^2 \ar[r]_G & \C^2 \\}
$$
commute. The fixed points of $G$ are the solutions of the
following system
$$
\left\{
\begin{array}{ccccc}
x & = & x & + & \frac{1}{\lambda_0}y \\
y & = & y & &\\
\end{array}
\right.
$$
and this are $\{ (x,0) \in \mathbb{C}^2;\ x \in \mathbb{C}.$ Then
the points $ \Fix_1(g)=\{ (x:0:1)\in \bc P(2);\ x\in \mathbb{C}
\}$ are fixed by $g.$ Now we consider the points of the form
$(x:1:z)\in \bc P(2).$ We have $g(x:1:z)=(\lambda_0 x + 1 :
\lambda_0 : \lambda_0 z)$ and $G : \mathbb{C}^2 \rightarrow
\mathbb{C}^2$ defined by $G(x,z)=\left( x + \frac{1}{\lambda_0}, z
\right)$ such that
$$
\xymatrix{E_1 \ar[r]^g \ar[d]_{\varphi_1} & E_1 \ar[d]^{\varphi_1}\\
\C^2 \ar[r]_G & \C^2 \\}
$$
commute. Then fixed points of $G$ are the solution of the system
$$
\left\{
\begin{array}{ccccc}
x & = & x & + & \frac{1}{\lambda_0} \\
z & = & z & & \\
\end{array}
\right.
$$

Notice that this system doesn't have solutions. Therefore there
are not fixed points of $g$ of the form $(x:1:z)\in \bc P(2).$ And
now we consider the points of the form $(1:y:z)\in\bc P(2).$ We
have $g(1:y:z)=(\lambda_0 + y : \lambda_0 y : \lambda_0 z)$ and
$G:\mathbb{C}^2\rightarrow\mathbb{C}^2$ defined by $G(y,z)=\left(
\frac{\lambda_0 y}{\lambda_0 + y},\frac{\lambda_0 z}{\lambda_0 +
y} \right)$ such that the following diagram is commutative
$$
\xymatrix{E_0 \ar[r]^g \ar[d]_{\varphi_0} & E_0 \ar[d]^{\varphi_0}\\
\C^2 \ar[r]_G & \C^2 \\}
$$
if $y \neq -\lambda_0.$ Notice that if $y=-\lambda_0$ then there
are not fixed points of $g.$ The fixed points of $G$ are given by
the solutions of the system
$$
\left\{
\begin{array}{ccc}
y & = & \frac{\lambda_0 y}{\lambda_0 + y} \\
z & = & \frac{\lambda_0 z}{\lambda_0 + y} \\
\end{array}
\right.
$$
and these are the points $\{(0,z)\in \mathbb{C}^2;\ z\in
\mathbb{C}\}.$ Therefore $\Fix_3(g)=\{(1:0:z)\in \bc P(2);\
z\in\mathbb{C} \}$ is a subset of the fixed points of $g\in
Aut(\bc P(2)).$ Therefore the fixed points of $g$ in this case are
two projective lines
$$
\Fix(g)=\{ (x:0:1)\in \bc P(2);\ x\in \mathbb{C} \} \cup \{
(1:0:z)\in \bc P(2);\ z\in\mathbb{C} \}.
$$

{\bf Case (iii.3).} We have
$$
J= \left[
\begin{array}{ccc}
\lambda_0 & 1 & 0 \\
0 & \lambda_0 & 1 \\
0 & 0 & \lambda_0 \\
\end{array}
\right],
$$
and $f$ is conjugate to $g(x:y:z)=(\lambda_0 x + y : \lambda_0 y +
z : \lambda_0 z).$ Let us study the fixed points of $g.$ First, we
consider the points of the following form $(x:y:1) \in \bc P(2).$
Then we have $g(x:y:1)=(\lambda_0 x + y : \lambda_0 y + 1 :
\lambda_0 )$ and $G:\mathbb{C}^2 \rightarrow \mathbb{C}^2$ defined
by $G(x,y)=\left( x+ \frac{y}{\lambda_0},y+ \frac{1}{\lambda_0}
\right)$ such that
$$
\xymatrix{E_2 \ar[r]^g \ar[d]_{\varphi_2} & E_2 \ar[d]^{\varphi_2}\\
\C^2 \ar[r]_G & \C^2 \\}
$$
commute. The fixed points of $G$ are given by the system
$$
\left\{
\begin{array}{ccccc}
x & = & x & + & \frac{1}{\lambda_0}y \\
y & = & y & + & \frac{1}{\lambda_0} \\
\end{array}
\right.
$$

Therefore there are not fixed points of $g \in\Aut(\bc P(2))$ of
the form $(x:y:1) \in \bc P(2)$ in this case, because there are
not solutions for this system. Now we consider points of the form
$(x:1:z) \in \bc P(2).$ We obtain $g(x:1:z)=(\lambda_0 x + 1 :
\lambda_0  + z : \lambda_0 z)$ and $G:\mathbb{C}^2 \rightarrow
\mathbb{C}^2$ defined by $G(x,z)=\left(\frac{\lambda_0 x +
1}{\lambda_0 + z}, \frac{\lambda_0 z}{\lambda_0 + z} \right)$ such
that
$$
\xymatrix{E_1 \ar[r]^g \ar[d]_{\varphi_1} & E_1 \ar[d]^{\varphi_1}\\
\C^2 \ar[r]_G & \C^2 \\}
$$
commute if $z \neq - \lambda_0.$ Notice that if $z=- \lambda_0$
then there are not fixed points of $g\in \Aut (\bc P(2)).$ Then
the fixed points of $g$ are given by the fixed points of $G$ and
these are given by the solutions of the system
$$
\left\{
\begin{array}{ccc}
x & = & \frac{\lambda_0 x + 1}{\lambda_0 z} \\
z & = & \frac{\lambda_0 z}{\lambda_0 + z} \\
\end{array}
\right.
$$

Therefore there are not fixed points of $g$ in this case. By
analogy with it we consider the points of the form $(1:y:z) \in
\bc P(2).$ We obtain $g(1:y:z)=(\lambda_0 + y : \lambda_0 y + z :
\lambda_0 z)$ and $G : \C^2 \to \C^2$ defined by
$G(y,z)=\left(\frac{\lambda_0 y + z}{\lambda_0 + y},
\frac{\lambda_0 z}{\lambda_0 + y} \right)$ such that
$$
\xymatrix{E_0 \ar[r]^g \ar[d]_{\varphi_0} & E_0 \ar[d]^{\varphi_0}\\
\C^2 \ar[r]_G & \C^2 \\}
$$
commute if $y \neq - \lambda_0.$ Observe that if $y= -\lambda_0$
then there are not fixed points of $g.$ The fixed points of $G$
are given by the system
$$
\left\{
\begin{array}{ccc}
y & = & \frac{\lambda_0 y + z}{\lambda_0 + y} \\
z & = & \frac{\lambda_0 z}{\lambda_0 + y} \\
\end{array}
\right.
$$

And then $(0,0)$ is the fixed point by $G.$ Therefore $(1:0:0)$ is
the only fixed point by $g.$ And we have finished the proof of
Theorem~\ref{Theorem:ClassificacaoBiholomorfismoEspacoProjetivo}
\end{proof}

Now we obtain the following

\begin{Definition}{\rm
We shall say that the biholomorphism $f: \bc P(2) \to \bc P(2)$
except for identity application is:
\begin{enumerate}
\item[(i)] of type P1 if one point is fixed by $f;$ \item[(ii)] of
type P2 if two points are fixed by $f;$ \item[(iii)] of type P3 if
three points are fixed by $f;$ \item[(iv)] of type R2 if two
projective lines are fixed by $f;$ \item[(v)] of type P1R2 if one
point and two projective lines are fixed by $f.$
\end{enumerate}
}
\end{Definition}

\section{Construction of Riccati foliations on $\ov\bc \times \bc P(2)$}               % Seção 5

In this section we address the following question:

\begin{Question} Let be given elements $f_1, \dots, f_k$ of the group
$\Aut(\bc P(2))$. Is there a Riccati foliation $\fa$ on $\ov\bc
\times \bc P(2)$ such that the global holonomy of $\fa$ is
conjugate to the subgroup of $\Aut(\bc P(2))$ generated by $f_1,
\dots, f_k$? \end{Question}

We proceed similarly to   \cite{cidinho},  that is, we  construct
a singular holomorphic foliation $\fa$ on $\ov\bc \times \bc P(2)$
by gluing together local foliations transverse to almost every
fiber and given in a neighborhood of the invariant fibers by
suitable local models given in terms of the normal form of the
corresponding holonomy map as in
Theorem~\ref{Theorem:ClassificacaoBiholomorfismoEspacoProjetivo}.

\begin{proof} [Proof of Theorem~\ref{Theorem:deRealizacao}]                            % Subseção 5.
Let $f_0=(f_1\circ \cdots \circ f_k)^{-1}$ be a biholomorphism and
let $x_0 =0, x_1, \dots, x_k$ be points in $\C.$ For each $j \in
\{0,1,\dots,k\}$ let $D_j$ be a disk of radius $r > 0$ and center
$x_j$ such that $|x_i - x_j| > 2r$ for all $i \neq j,\ 0 \leq i,j
\leq k.$ For each $j\in \{1,\dots,k\}$ we choose $x'_j=x_j +
\frac{r}{2} \in D_j \smallsetminus \{x_j\}$ and $x''_j=\frac{r}{2}
\exp(\frac{2 \pi \sqrt{-1}(j-1)}{k}) \in D_0 \smallsetminus
\{0\}.$

Let $\alpha_1, \dots, \alpha_k :[0,1] \to \C$ be simple curves
such that (i) $\alpha_j(0)=x''_j$ and $\alpha_j(1)=x'_j;$ (ii)
$\alpha_j([0,1]) \cap D_i =\emptyset$ if $i \neq j,\ i \neq 0;$
(iii) $\alpha_i([0,1]) \cap \alpha_j([0,1]) =\emptyset$ if $i \neq
j;$ (iv) for every $j \in \{1,\dots, k\},$ $\alpha_j([0,1]) \cap
D_0$ and $\alpha_j([0,1]) \cap D_j$ are segments of straight lines
contained in diameters of $D_0$ and $D_j$ respectively.

Let $A_1, \dots, A_k$ be tubular neighborhoods of $\alpha_1,
\dots, \alpha_k$ respectively such that (v) $A_j \cap
D_i=\emptyset$ if $i \neq j,\ i \neq 0;$ (vi) $A_i \cap
A_j=\emptyset$ if $i \neq j;$ (vii) $A_j \cap D_0$ and $A_j \cap
D_j$ are contained in sectors of $D_0$ and $D_j,$ $1 \leq j \leq
k,$ respectively.

Let $U=(\bigcup^k_{j=1}A_j) \cup (\bigcup^k_{j=0}D_j)$ be a set
and let $\gamma = \partial U$ be a simple curve. Let $T$ be a
tubular neighborhood of $\gamma$ and let $V=(\ov\bc \smallsetminus
U) \cup T$ be a set. Then $\{ A_1, \dots, A_k, D_0, \dots, D_k, V
\}$ is a covering of $\ov\bc$ by open sets. For every $j \in \{1,
\dots, k\}$ we consider affine coordinates $(x,U_j,V_j)$ in $A_j
\times \C^2 \hookrightarrow A_j \times \bc P(2),\ x \in A_j,\
(U_j,V_j) \in \C^2.$ For each $i \in \{0,1,\dots,k \}$ we consider
affine coordinates $(x,u_i,v_i)$ in $D_i \times \C^2
\hookrightarrow D_i \times \bc P(2),\ x \in D_i,\ (u_i,v_i) \in
\C^2.$ Put affine coordinates $(w,y_1,y_2)$ in $V \times \C^2$
where $w= \frac{1}{x} \in V$ and $(y_1,y_2)\in \C^2.$

We take in each set of the form $A_j \times \C^2,$ $V \times \C^2$
and $D_i \times \C^2$ a local model of foliation and glue them
together. The local models are as follows:
\begin{enumerate}

\item In $A_j \times \bc P(2)$ we consider the horizontal
foliation whose leaves are of the form $A_j \times \{p\}, \ p \in
\bc P(2)$ for each $j \in \{1, \dots, k \}.$

\item In $V \times \bc P(2)$ we consider the horizontal foliation
whose leaves are of the form $V \times \{p\},\ p \in \bc P(2).$

\item In $D_i \times \C^2$ we consider the singular holomorphic
foliation $\fa_i$ induced by the vector field $X_i$ in $D_i \times
\C^2$ for every $i \in \{ 0,1,\dots,k \}.$ Put $l \in \{0, 1,
\dots, k \}.$ There exists an affine coordinate such that $f_l:
E_0 \to E_0$ can be written in one of the following forms:
\begin{enumerate}
\item[(a)] $f_l(u,v)=(u + \mu_l v, v + \mu_l)$ if $f_l$ is of type
P1. \item[(b)] $f_l(u,v)=(\mu_l u + \nu_l v, \mu_l v)$ if $f_l$ is
of type P2. \item[(c)] $f_l(u,v)=(\lambda'_l u, \lambda''_l v)$ if
$f_l$ is P3. \item[(d)] $f_l(u,v)=(\lambda''_l u, \lambda''_l v)$
if $f_l$ is R2. \item[(e)] $f_l(u,v)=(u+ \nu_l v, v)$ if $f_l$ is
of type P1R2.
\end{enumerate}
where $\lambda'_l, \lambda''_l, \mu_l, \nu_l \in \C \smallsetminus
\{0\}$ are different.
\end{enumerate}

$\bullet$ In  case (c) ((d) respectively) we consider the singular
holomorphic foliation $\fa_j$ on $D_j \times \C^2$ given by the
vector field
\begin{equation}\label{Campo:EstruturaLocal1}
X_j(x,u_j,v_j)=(x-x_j)\frac{\partial}{\partial x} + \alpha'_j u_j
\frac{\partial}{\partial u_j}+ \alpha''_j v_j
\frac{\partial}{\partial v_j}
\end{equation}
where $\exp(2 \pi \sqrt{-1}\alpha'_j)=\lambda'_j$ and $\exp(2 \pi
\sqrt{-1}\alpha''_j)=\lambda''_j.$ (In case (d) the foliation
$\fa_j$ is given by
$X_j(x,u_j,v_j)=(x-x_j)\frac{\partial}{\partial x} + \alpha''_j
u_j \frac{\partial}{\partial u_j}+ \alpha''_j v_j
\frac{\partial}{\partial v_j}$ where $\alpha''_j$ and
$\lambda''_j$ are the same, respectively.)

Let $\gamma_j(\theta)=(r_j \exp(\sqrt{-1} \theta) + x_j,0,0),$ $0
\leq \theta \leq 2 \pi$ be a curve where $0 < r_j < r.$ Let
$\Sigma_j= \{p_j\} \times \C^2,$ $p_j \in \gamma_j([0,2 \pi]).$

\begin{Afirmacao}\label{Afirmacao:EstruturaLocal}
The holonomy transformation of $\fa_j$ associated to $\Sigma_j$
and $\gamma_j$ is of the form $(u_j, v_j) \mapsto (\lambda'_j u_j,
\lambda''_j v_j)$ where the foliation $\fa_j$ on $D_j \times \C^2$
is induced by equation~\ref{Campo:EstruturaLocal1}.
\end{Afirmacao}

In fact, let $\Sigma_j=\{ x_j + r_j\} \times \C^2$ be a local
transverse section and let $p_j=(x_j + r_j ,0 ,0) \in \Sigma_j.$
Suppose $p_1: D_j \times \C^2 \to D_j,$ $p_1(x,y,z)=x.$ Observe
that the fibers $p_1^{-1}(x),$ $x \neq x_j,$ are transverse to
$\fa.$ Let $q=(x_j + r_j, u_j, v_j) \in \Sigma_j$ and let
$\gamma_q(\theta)=(x(\theta), u_j(\theta), v_j(\theta))$ be the
lifting of $\gamma_j$ by $p_1$ with base point $q.$ Therefore
$$
x'(\theta)=p_1(\gamma'_q(\theta))=p_1(\gamma'_j(\theta))=
\sqrt{-1}r_j \exp(\sqrt{-1} \theta),
$$
and, if $Y_j=(u_j, v_j) \in \C^2$ then
$$
\frac{Y'_j}{x'}= \frac{Y'_j}{\sqrt{-1}r_j \exp(\sqrt{-1} \theta)}.
$$
On the other hand, by equation~\ref{Campo:EstruturaLocal1} we have
$$
\frac{dx}{dT}=x-x_j
$$
and
$$
\frac{dY_j}{dT}=\left[
\begin{array}{cc}
\alpha'_j & 0\\
0         & \alpha''_j\\
\end{array}
\right] \cdot \left[
\begin{array}{c}
u_j\\
v_j\\
\end{array}
\right]= A Y_j
$$
so we obtain
$$
\frac{dY_j}{dx}=\frac{dY_j}{dT} \cdot
\frac{dT}{dx}=\frac{\frac{dY_j}{dT}}{\frac{dx}{dT}}=\frac{A Y_j}{x
- x_j}
$$
and we have
$$
\frac{Y'_j}{\sqrt{-1}r_j \exp(\sqrt{-1} \theta)}= \frac{Y'_j}{x'}=
\frac{A Y_j}{r_j \exp(\sqrt{-1} \theta)}
$$
then $Y'_j=\sqrt{-1}A Y_j.$ Notice that the solution of
$Y'_j=\sqrt{-1}AY_j$ such that $Y_j(0)= (u_j, v_j)$ is
$Y_j(\theta)= \exp(\sqrt{-1} \theta A) \cdot Y_j(0).$ Therefore
the holonomy is the biholomorphism $f:\Sigma_j \to \Sigma_j$
defined by
$$
\begin{array}{ccl}
f(u_j, v_j) & = & Y_j(2 \pi)\\
&&\\
            & = & \exp\left( 2 \pi \sqrt{-1} \left[
                                       \begin{array}{cc}
                                       \alpha'_j & 0 \\
                                       0         & \alpha''_j \\
                                       \end{array}
                                       \right] \right)
                                       \cdot
                                       \left[
                                       \begin{array}{c}
                                       u_j\\
                                       v_j \\
                                       \end{array}
                                      \right]\\
&&\\
            & = &\left[
            \begin{array}{cc}
            \exp(2 \pi \sqrt{-1} \alpha'_j) & 0 \\
            0                               & \exp(2 \pi \sqrt{-1} \alpha''_j) \\
            \end{array}
            \right]
            \cdot
            \left[
            \begin{array}{c}
            u_j\\
            v_j \\
            \end{array}
            \right]\\
&&\\
            & = & (\exp(2 \pi \sqrt{-1} \alpha'_j) u_j, \exp(2 \pi \sqrt{-1} \alpha''_j)
            v_j)\\
&&\\
            & = & (\lambda'_j u_j, \lambda''_j v_j)\\
\end{array}
$$
and this proves the assertion. (In case (d) we prove the holonomy
transformation of $\fa_j$ associated to $\Sigma_j$ and $\gamma_j$
is $(u_j, v_j) \mapsto (\lambda''_j u_j, \lambda''_j v_j)$
respectively.)

$\bullet$ In  case (e) we consider the foliation $\fa_j$ on
$D_j\times \C^2$ given by
\begin{equation}\label{Campo:EstruturaLocal3}
X_j(x,u_j,v_j)=(x-x_j)\frac{\partial}{\partial x} + \frac{\nu_j}{2
\pi \sqrt{-1}} v_j \frac{\partial}{\partial u_j}.
%+ 0 \cdot
%\frac{\partial}{\partial v_j}.
\end{equation}
Let $\gamma_j(\theta)=(r_j \exp(\sqrt{-1} \theta) + x_j,0,0),$ $0
\leq \theta \leq 2 \pi$ be a curve where $0 < r_j < r.$ Let
$\Sigma_j= \{p_j\} \times \C^2,$ $p_j \in \gamma_j([0,2 \pi]).$

\begin{Afirmacao}
The holonomy transformation of $\fa_j$ associated to $\Sigma_j$
and $\gamma_j$ is the  following form $(u_j, v_j) \mapsto (u_j +
\nu_j v_j, v_j),$ where the foliation $\fa_j$ on $D_j \times \C^2$
is given by equation~\ref{Campo:EstruturaLocal3}.
\end{Afirmacao}

In fact, let $\Sigma_j=\{ x_j + r_j\} \times \C^2$ be a local
transverse section and let $p_j=(x_j + r_j ,0 ,0) \in \Sigma_j.$
Suppose $p_1: D_j \times \C^2 \to D_j,$ $p_1(x,y,z)=x.$ Notice
that the fibers $p_1^{-1}(x),$ $x \neq x_j$ are transverse to
$\fa.$ Let $q=(x_j + r_j, u_j, v_j) \in \Sigma_j$ and let
$\gamma_q(\theta)=(x(\theta), u_j(\theta), v_j(\theta))$ be the
lifting of $\gamma_j$ by $p_1$ with base point $q.$ Therefore
$x'(\theta)=p_1(\gamma'_q(\theta))=p_1(\gamma'_j(\theta))=
\sqrt{-1}r_j \exp(\sqrt{-1} \theta),$ and, if $Y_j=(u_j, v_j) \in
\C^2$ then
$$
\frac{Y'_j}{x'}= \frac{Y'_j}{\sqrt{-1}r_j \exp(\sqrt{-1} \theta)}.
$$
On the other hand, by equation~\ref{Campo:EstruturaLocal3} we have
$\frac{dx}{dT}=x-x_j$ and
$$
\frac{dY_j}{dT}=\left[
\begin{array}{cc}
0 & \frac{\nu_j}{2 \pi \sqrt{-1}}\\
0 & 0\\
\end{array}
\right] \cdot \left[
\begin{array}{c}
u_j\\
v_j\\
\end{array}
\right]= A Y_j,
$$
so we obtain
$$
\frac{dY_j}{dx}=\frac{dY_j}{dT} \cdot
\frac{dT}{dx}=\frac{\frac{dY_j}{dT}}{\frac{dx}{dT}}=\frac{A Y_j}{x
- x_j}
$$
and we have got
$$
\frac{Y'_j}{\sqrt{-1}r_j \exp(\sqrt{-1} \theta)}= \frac{Y'_j}{x'}=
\frac{A Y_j}{r_j \exp(\sqrt{-1} \theta)}
$$
therefore $Y'_j=\sqrt{-1}A Y_j.$ Observe that the solution of
$Y'_j=\sqrt{-1}AY_j$ with $Y_j(0)= (u_j, v_j)$ is $Y_j(\theta)=
\exp(\sqrt{-1} \theta A) \cdot Y_j(0).$ Therefore the holonomy is
the biholomorphism $f:\Sigma_j \to \Sigma_j$ defined by
$$
\begin{array}{ccl}
f(u_j, v_j) & = & Y_j(2 \pi)\\
&&\\
            & = & \exp\left( 2 \pi \sqrt{-1} \left[
                                       \begin{array}{cc}
                                       0 & \frac{\nu_j}{2 \pi \sqrt{-1}}\\
                                       0 & 0\\
                                       \end{array}
                                       \right] \right)
                                       \cdot
                                       \left[
                                       \begin{array}{c}
                                       u_j\\
                                       v_j \\
                                       \end{array}
                                      \right]\\
&&\\
            & = &\exp \left( \left[
            \begin{array}{cc}
            0 & \nu_j\\
            0 & 0\\
            \end{array}
            \right] \right)
            \cdot
            \left[
            \begin{array}{c}
            u_j\\
            v_j \\
            \end{array}
            \right]\\
&&\\
            & = & \left(
            \left[
            \begin{array}{cc}
            1 & 0 \\
            0 & 1 \\
            \end{array}
            \right]
            +
            \left[
            \begin{array}{cc}
            0 & \nu_j \\
            0 & 0 \\
            \end{array}
            \right]
                  \right)
            \cdot
            \left[
            \begin{array}{c}
            u_j\\
            v_j \\
            \end{array}
            \right]\\
&&\\
            & = &             \left[
            \begin{array}{cc}
            1 & \nu_j \\
            0 & 1 \\
            \end{array}
            \right]
            \cdot
            \left[
            \begin{array}{c}
            u_j\\
            v_j \\
            \end{array}
            \right]\\
&&\\
            & = & (u_j + \nu_j v_j, v_j)\\
\end{array}
$$
and it proves the assertion.

In  case (b) we consider the singular foliation $\fa$ on $D_j
\times \C^2$ given by
\begin{equation}\label{Campo:EstruturaLocal11}
X_j(x,u_j,v_j)=(x-x_j)\frac{\partial}{\partial x} + ( \lambda u_j
+ \frac{\nu}{2 \pi \sqrt{-1} \mu} v_j )\frac{\partial}{\partial
u_j} + \lambda v_j \frac{\partial}{\partial v_j}.
\end{equation}
where $\exp(2 \pi \sqrt{-1} \lambda) = \mu.$

Let $\gamma_j(\theta)=(r_j \exp(\sqrt{-1} \theta) + x_j,0,0),$ $0
\leq \theta \leq 2 \pi$ be a curve where $0 < r_j < r.$ Let
$\Sigma_j= \{p_j\} \times \C^2,$ $p_j \in \gamma_j([0,2 \pi]).$

\begin{Afirmacao}
The holonomy transformation of $\fa_j$ associated to $\Sigma_j$
and $\gamma_j$ is of the form $(u_j, v_j) \mapsto (\mu u_j + \nu
v_j, \mu v_j)$ where the foliation $\fa_j$ on $D_j \times \C^2$ is
induced by equation~\ref{Campo:EstruturaLocal11}.
\end{Afirmacao}

In fact, let $\Sigma_j=\{ x_j + r_j\} \times \C^2$ be a local
transverse section and let $p_j=(x_j + r_j ,0 ,0) \in \Sigma_j.$
Suppose $p_1: D_j \times \C^2 \to D_j,$ $p_1(x,y,z)=x.$ Notice
that the fibers $p_1^{-1}(x),$ $x \neq x_j$ are transverse to
$\fa.$ Let $q=(x_j + r_j, u_j, v_j) \in \Sigma_j$ and let
$\gamma_q(\theta)=(x(\theta), u_j(\theta), v_j(\theta))$ be the
lifting of $\gamma_j$ by $p_1$ with base point $q.$ Therefore
$$
x'(\theta)=p_1(\gamma'_q(\theta))=p_1(\gamma'_j(\theta))=
\sqrt{-1}r_j \exp(\sqrt{-1} \theta),
$$
and, if $Y_j=(u_j, v_j) \in \C^2$ then $ \frac{Y'_j}{x'}=
\frac{Y'_j}{\sqrt{-1}r_j \exp(\sqrt{-1} \theta)}. $ On the other
hand, by equation~\ref{Campo:EstruturaLocal11} we have $
\frac{dx}{dT}=x-x_j $ and
$$
\frac{dY_j}{dT}=\left[
\begin{array}{cc}
\lambda & \frac{\nu}{2 \pi \sqrt{-1} \mu}\\
0 & \lambda \\
\end{array}
\right] \cdot \left[
\begin{array}{c}
u_j\\
v_j\\
\end{array}
\right]= A Y_j,
$$
so we obtain
$$
\frac{dY_j}{dx}=\frac{dY_j}{dT} \cdot
\frac{dT}{dx}=\frac{\frac{dY_j}{dT}}{\frac{dx}{dT}}=\frac{A Y_j}{x
- x_j}
$$
and we have
$$
\frac{Y'_j}{\sqrt{-1}r_j \exp(\sqrt{-1} \theta)}= \frac{Y'_j}{x'}=
\frac{A Y_j}{r_j \exp(\sqrt{-1} \theta)}
$$
therefore $Y'_j=\sqrt{-1}A Y_j.$ Observe that the solution of
$Y'_j=\sqrt{-1}AY_j$ such that $Y_j(0)= (u_j, v_j)$ is
$Y_j(\theta)= \exp(\sqrt{-1} \theta A) \cdot Y_j(0).$ Therefore
the holonomy is the biholomorphism $f:\Sigma_j \to \Sigma_j$
defined by
$$
\begin{array}{ccl}
f(u_j, v_j) & = & Y_j(2 \pi)\\
&&\\
            & = & \exp\left( 2 \pi \sqrt{-1} \left[
                                       \begin{array}{cc}
                                       \lambda & \frac{\nu}{2 \pi \sqrt{-1} \mu}\\
                                       0 & \lambda\\
                                       \end{array}
                                       \right] \right)
                                       \cdot
                                       \left[
                                       \begin{array}{c}
                                       u_j\\
                                       v_j \\
                                       \end{array}
                                      \right]\\
&&\\
            & = & \exp\left(
            \left[
            \begin{array}{cc}
            2 \pi \sqrt{-1} \lambda & 0 \\
            0 & 2 \pi \sqrt{-1} \lambda \\
            \end{array}
            \right]
            +
            \left[
            \begin{array}{cc}
            0 & \frac{\nu}{\mu} \\
            0 & 0 \\
            \end{array}
            \right]
                  \right)
            \cdot
            \left[
            \begin{array}{c}
            u_j\\
            v_j \\
            \end{array}
            \right]\\
&&\\
            & = & \exp\left( \left[
                                       \begin{array}{cc}
                                       2 \pi \sqrt{-1} \lambda & 0 \\
                                       0 & 2 \pi \sqrt{-1} \lambda \\
                                       \end{array}
                                       \right] \right)
                                       \cdot
                                       \exp\left( \left[
                                       \begin{array}{cc}
                                       0 & \frac{\nu}{\mu} \\
                                       0 & 0 \\
                                       \end{array}
                                       \right] \right)
                                       \left[
                                       \begin{array}{c}
                                       u_j\\
                                       v_j \\
                                       \end{array}
                                      \right]\\
&&\\
& = & \left[
                                       \begin{array}{cc}
                                       \mu & 0 \\
                                       0 & \mu \\
                                       \end{array}
                                       \right]
                                       \left(
                                       \left[
                                       \begin{array}{cc}
                                       1 & 0 \\
                                       0 & 1 \\
                                       \end{array}
                                       \right]+
                                        \frac{\nu}{\mu} \left[
                                       \begin{array}{cc}
                                       0 & 1 \\
                                       0 & 0 \\
                                       \end{array}
                                       \right]
                                       \right)
                                       \left[
                                       \begin{array}{c}
                                       u_j\\
                                       v_j \\
                                       \end{array}
                                      \right]\\
&&\\
 & = & \left[
                                       \begin{array}{cc}
                                       \mu & 0 \\
                                       0 & \mu \\
                                       \end{array}
                                       \right]
                                       \left[
                                       \begin{array}{cc}
                                       1 & \frac{\nu}{\mu} \\
                                       0 & 1 \\
                                       \end{array}
                                       \right]
                                       \left[
                                       \begin{array}{c}
                                       u_j\\
                                       v_j \\
                                       \end{array}
                                      \right]\\
&&\\
            & = &             \left[
            \begin{array}{cc}
            \mu & \nu \\
            0 & \mu \\
            \end{array}
            \right]
            \cdot
            \left[
            \begin{array}{c}
            u_j\\
            v_j \\
            \end{array}
            \right]\\
&&\\
            & = & (\mu u_j + \nu v_j, \mu v_j)\\
\end{array}
$$
and this proves the assertion.

In  case (a) we consider the foliation $\fa_j$ on $D_j \times
\C^2$ given by
\begin{equation}\label{Campo:EstruturaLocal12}
X_j(x,u_j,v_j)=(x-x_j)\frac{\partial}{\partial x} + (\frac{\mu}{2
\pi \sqrt{-1}} v_j - \frac{\mu^2}{4 \pi \sqrt{-1}})
\frac{\partial}{\partial u_j} + \frac{\mu}{2 \pi \sqrt{-1}}
\frac{\partial}{\partial v_j}.
\end{equation}
Let $\gamma_j(\theta)=(r_j \exp(\sqrt{-1} \theta) + x_j,0,0),$ $0
\leq \theta \leq 2 \pi$ be a curve where $0 < r_j < r.$ Let
$\Sigma_j= \{p_j\} \times \C^2,$ $p_j \in \gamma_j([0,2 \pi]).$

\begin{Afirmacao}
The holonomy transformation of $\fa_j$ associated to $\Sigma_j$
and $\gamma_j$ is of the form $(u_j, v_j) \mapsto (u_j + \mu v_j,
v_j + \mu).$
\end{Afirmacao}

In fact, let $\Sigma_j=\{ x_j + r_j\} \times \C^2$ be a local
transverse section and let $p_j=(x_j + r_j ,0 ,0) \in \Sigma_j.$
Suppose $p_1: D_j \times \C^2 \to D_j,$ $p_1(x,y,z)=x.$ Notice
that the fibers $p_1^{-1}(x),$ $x \neq x_j$ are transverse to
$\fa.$ Let $q=(x_j + r_j, u_j, v_j) \in \Sigma_j$ and let
$\gamma_q(\theta)=(x(\theta), u_j(\theta), v_j(\theta))$ be the
lifting of $\gamma_j$ by $p_1$ with base point $q.$ Therefore
$$
x'(\theta)=p_1(\gamma'_q(\theta))=p_1(\gamma'_j(\theta))=
\sqrt{-1}r_j \exp(\sqrt{-1} \theta),
$$
and, if $Y_j=(u_j, v_j) \in \C^2$ then
$$
\frac{Y'_j}{x'}= \frac{Y'_j}{\sqrt{-1}r_j \exp(\sqrt{-1} \theta)}.
$$
On the other hand, by equation~\ref{Campo:EstruturaLocal3} we have
$$
\frac{dx}{dT}=x-x_j
$$
and
$$
\frac{dY_j}{dT}=\left[
\begin{array}{cc}
0 & \frac{\mu}{2 \pi \sqrt{-1}}\\
0 & 0\\
\end{array}
\right] \cdot \left[
\begin{array}{c}
u_j\\
v_j\\
\end{array}
\right] + \left[\begin{array}{c}
-\frac{\mu^2}{4 \pi \sqrt{-1}}\\
\frac{\mu}{2 \pi \sqrt{-1}}\\
\end{array} \right]= A Y_j + B,
$$
so we obtain $ \frac{dY_j}{dx}=\frac{dY_j}{dT} \cdot
\frac{dT}{dx}=\frac{\frac{dY_j}{dT}}{\frac{dx}{dT}}=\frac{A Y_j +
B }{x - x_j} $ and we have
$$
\frac{Y'_j}{\sqrt{-1}r_j \exp(\sqrt{-1} \theta)}= \frac{Y'_j}{x'}=
\frac{A Y_j + B}{r_j \exp(\sqrt{-1} \theta)}
$$
therefore $Y'_j=\sqrt{-1}A Y_j + \sqrt{-1}B.$ Observe that the
solution of $$Y'_j=\sqrt{-1}AY_j + \sqrt{-1}B$$ with $Y_j(0)=
(u_j, v_j)$ is
$$
Y_j(\theta)= \exp(\sqrt{-1} \theta A) \cdot \left[\int^{t}_{0}
\exp(\sqrt{-1} s A) \cdot B(s) ds + Y_j(0)\right],
$$
that is,
$$
Y_j(\theta)=\left( -\frac{\mu^2}{4 \pi} \theta -\frac{1}{2}
\frac{\mu^2}{4 \pi^2} \theta^2 + u_j + \frac{\mu^2}{4 \pi^2}
\theta^2 + \frac{\mu}{2 \pi} v_j \theta, \frac{\mu}{2 \pi}\theta
+v_j \right)
$$
Therefore the holonomy is the biholomorphism $f:\Sigma_j \to
\Sigma_j$ defined by
$$
f(u_j, v_j)= Y_j(2 \pi) = (u_j + \mu v_j, v_j + \mu)
$$
and it proves the assertion.

Let us glue together the foliation on $A_j \times \bc P(2)$ and
the foliations on $D_j \times \bc P(2).$ First we consider $f_j$
of type P3 or R2. Then we  are in case (c) or (d). Observe that
$A_j \cap D_j$ is simply connected and $x_j \not\in A_j \cap D_j,$
we consider the coordinate system $(x, \widetilde{u}_j,
\widetilde{v}_j)$ in $(A_j \cap D_j) \times \bc P(2)$ such that
$$
\widetilde{u}_j = u_j \exp(-\alpha'_j
\log(\frac{x-x_j}{\frac{r}{2}}))
$$
and
$$
\widetilde{v}_j = v_j \exp(-\alpha''_j
\log(\frac{x-x_j}{\frac{r}{2}}))
$$
where $\log$ is the branch of the logarithm in $\ov\bc
\smallsetminus \{ x+ \sqrt{-1}y;\ x \leq 0 \}$ such that
$\log(1)=0.$ Observe that $x'_j = x_j + \frac{r}{2}$ implies that
$$
\widetilde{u}_j(x'_j, u_j) = u_j \exp(-\alpha'_j \log(1)) = u_j,\
\ \widetilde{v}_j(x'_j, v_j) = v_j
$$
and $\widetilde{u}_j(x,0)=0,\ \ \widetilde{v}_j(x,0)=0 .$
Therefore the leaves of the foliation on $(A_j \cap D_j) \times
\bc P(2)$ are given by $(\widetilde{u}_j,\widetilde{v}_j)=$
constant. Let us identify the point $(x, U_j, V_j) \in (A_j \cap
D_j) \times \C^2 \subset A_j \times \C^2$ with the point $((x,
u_j, v_j)) \in (A_j \cap D_j) \times \C^2 \subset D_j \times
\C^2,$ where
\begin{equation}\label{Equacao:Colagem1}
u_j = U_j \exp(\alpha'_j \log(\frac{x-x_j}{\frac{r}{2}}))
\end{equation}
and
\begin{equation}\label{Equacao:Colagem2}
v_j = V_j \exp(\alpha''_j \log(\frac{x-x_j}{\frac{r}{2}})).
\end{equation}

Notice that with equation~\ref{Equacao:Colagem1} and
equation~\ref{Equacao:Colagem2} we are gluing together in $(A_j
\cap D_j) \times \C^2$ plaques of the foliation $\tilde{\fa}$ on
$A_j \times \C^2$ with plaques of foliation $\widehat{\fa}$ on
$D_j \times \C^2.$ Observe that this identification sends the
fiber $\{x=c\} \subset A_j \times \C^2,$ $c \in A_j \cap D_j,$ in
the fiber $\{x=c\} \subset D_j \times \C^2,$ and the holonomy of
the curve $\beta_j=\alpha_j \ast \gamma_j \ast \alpha_j^{-1}$ in
the section $\Sigma''_j = \{x''_j\} \times \C^2 \subset A_j \times
\C^2$ with respect to the foliation obtained by gluing together
the $\widetilde{\fa}$ and $\widehat{\fa}$ is $(U_j, V_j) \mapsto
(\lambda'_j U_j, \lambda''_j V_j).$

If $f_j$ is of type P1R2 we are in case (e) and the
identifications \ref{Equacao:Colagem1} and  \ref{Equacao:Colagem2}
are
\begin{equation}\label{Equacao:Colagem11}
u_j = U_j - \frac{\nu_j}{2 \pi \sqrt{-1}} V_j \log\left(
\frac{x-x_j}{\frac{r}{2}} \right)
\end{equation}
\begin{equation}\label{Equacao:Colagem12}
v_j = V_j
\end{equation}

By analogy with it, if $f_j$ is of type P2 we are in case (b) and
the identifications are
\begin{equation}\label{Equacao:Colagem13}
u_j =\frac{1}{\lambda}(( U_j - \frac{\nu}{2 \pi \sqrt{-1}\mu} V_j)
\exp( \lambda \log ( \frac{x-x_j}{\frac{r}{2}} )))
\end{equation}
\begin{equation}\label{Equacao:Colagem14}
v_j = V_j \exp(\lambda \log( \frac{x-x_j}{\frac{r}{2}} ))
\end{equation}

And if $f_j$ is of type P1 we are in case (a) and the
identifications are
\begin{equation}\label{Equacao:Colagem15}
u_j = U_j + ( \frac{\mu}{2 \pi \sqrt{-1}} V_j + \frac{\mu^2}{(2
\pi \sqrt{-1})^2} \log( \frac{x-x_j}{\frac{r}{2}}) +
\frac{\mu^2}{4 \pi \sqrt{-1} } ) \cdot  \log (
\frac{x-x_j}{\frac{r}{2}} )
\end{equation}
\begin{equation}\label{Equacao:Colagem16}
v_j = V_j +  \frac{\mu}{2 \pi \sqrt{-1}} \log(
\frac{x-x_j}{\frac{r}{2}} )
\end{equation}

Now let us glue together the new foliation on $(A_j \cup D_j)
\times \C^2$ with the foliation on $(A_j \cap D_0) \times \C^2$
identify the points $(x, U_j, V_j) \in ((A_j \cup D_j) \cap D_0 )
\times \C^2$ with $(x, u_0, v_0) \in (A_j \cap D_0) \times \C^2
\subset D_0 \times \C^2$ by
\begin{equation}\label{Equacao:Colagem3}
u_0 = U_j \exp(\alpha'_0 \log(\frac{x}{x''_j}))
\end{equation}
and
\begin{equation}\label{Equacao:Colagem4}
v_0 = V_j \exp(\alpha''_0 \log(\frac{x}{x''_j})),
\end{equation}
where $\lambda'_0 = \exp(2 \pi \sqrt{-1} \alpha'_0)$ and
$\lambda''_0 = \exp(2 \pi \sqrt{-1} \alpha''_0),$ if $f_0$ is P3
or R2.

Notice that equation~\ref{Equacao:Colagem3} e
equation~\ref{Equacao:Colagem4} glue together plaques of the
foliation on $(A_j \cup D_j) \times \C^2$ with plaques of the
foliation on $(A_j \cap D_0) \times \C^2$ and this defines a new
foliation $\fa$ such that $D_0 \cup A_j \cup D_j$ is a leaf. The
holonomy of the curve $\beta_j$ in the section $\Sigma''_j =
\{x''_j\} \times \C^2 \subset D_0 \times \C^2$ is given by $(U_0,
V_0) \mapsto (\lambda'_j U_0, \lambda''_j V_0).$ Now suppose
$\gamma_0(\theta)= \frac{r}{2} \exp(\sqrt{-1} \theta),\ 0 \leq
\theta \leq 2 \pi,$ and for every $j=1, \dots, k$ let $\mu_j$ be
the segment of $\gamma_0$ between $x''_j$ and $\frac{r}{2}$ in the
positive sense. Let
$$
\delta_j= \mu_j \ast \beta_j \ast \mu_j^{-1}=\mu_j \ast \alpha_j
\ast \gamma_j \ast \alpha_j^{-1} \ast \mu_j^{-1},
$$
where $\gamma_j(\theta)=\frac{r}{2} \exp(\sqrt{-1} \theta) + x_j,\
\theta \in [0, 2 \pi]$ and $\Sigma_0 = \{ \frac{r}{2}\} \times
\C^2.$ The holonomy of the curve $\delta_j$ in $\Sigma_0$ is
$(U,V) \mapsto f_j(U, V).$

By analogy with this case, we have the identifications
$$
u_0 = U_j - \frac{\nu_j}{2 \pi \sqrt{-1}} V_j \log\left(
\frac{x}{x''_j} \right)
$$
$$
v_0 = V_j
$$
if $f_0$ is of type P1R2,
$$
u_0 =\frac{1}{\lambda}(( U_j - \frac{\nu}{2 \pi \sqrt{-1}\mu} V_j)
\exp( \lambda \log ( \frac{x-x_j}{\frac{r}{2}} )))
$$
$$
v_0 = V_j \exp(\lambda \log( \frac{x}{x''_j} ))
$$
if $f_0$ is of type P2 and
$$
u_0 = U_j + ( \frac{\mu}{2 \pi \sqrt{-1}} V_j + \frac{\mu^2}{(2
\pi \sqrt{-1})^2} \log( \frac{x}{x''_j}) + \frac{\mu^2}{4 \pi
\sqrt{-1} } ) \cdot  \log ( \frac{x}{x''_j} )
$$
$$
v_0 = V_j +  \frac{\mu}{2 \pi \sqrt{-1}} \log( \frac{x}{x''_j} )
$$
if $f_j$ is of type P1 respectively.

Now, let $\widetilde{M}=\ov\bc \times \C^2$ be a complex manifold
and let $\widetilde{\mathcal{F}}$ be a foliation obtained at the
end of the process. By construction the holonomy of the leaf $U =
\left( \bigcup_{i=0}^k A_i \right) \cap \left( \bigcup^k_{j=0} D_j
\right)$ in $\Sigma_0$ is generated by $f_1, \dots, f_k$ and the
holonomy of the curve $\delta_1 \ast \dots \ast \delta_k \ast
\gamma_0$  is the identity. Notice that $\widetilde{M}$ admits the
vertical foliation  $x=$ constant on $A_j \times \C^2,$ $D_j
\times \C^2,$ $D_0 \times \C^2$ and it cuts $U$ at a single point
and so we can define a projection $\widetilde{p}: \widetilde{M}
\to U$ such that $\widetilde{p}^{-1}(x)$ is the leaf of this new
foliation. Observe that this new foliation is transverse to
$\widetilde{\fa}$ in $\widetilde{M} \smallsetminus
\bigcup^k_{j=0}\{ x=x_j \}.$ Suppose the annulus $A=T \cap U,$ if
$\delta$ is a closed curve in $A$ which generates the homotopy of
$A,$ then the holonomy of $\delta$ with respect to
$\widetilde{\fa}$ in some transversal section is trivial, because
$\delta$ is homotopic to the curve $\delta_1 \ast \dots \ast
\delta_k \ast \gamma_0$ in $U \smallsetminus
\bigcup^k_{j=0}\{x_j\}$ and the holonomy of this is trivial. Then
we use the holonomy and obtain that the restricted foliation
$\widetilde{\mathcal{F}}|_{\widetilde{p}^{-1}(A)}$ is a product
foliation, that is, there is a biholomorphism $\varphi$ of the
some neighborhood $W$ of $A \subset \widetilde{p}^{-1}(A)$ onto $A
\times \Delta,$ where $\Delta \subset \C^2$ is a polydisc such
that it sends leaves of $\widetilde{\mathcal{F}}|_W$ onto leaves
$A \times \{c\},\ c \in \Delta$ of the trivial foliation.

Now we glue together the foliations $\widetilde{\fa}$ in
$\widetilde{M}$ an $\widehat{\mathcal{F}}$ in $V \times D$ by
$\varphi.$

Observe that we use the same ideas in the local model of foliation
in other affine coordinates of $\bc P(2)$ and it is proves the
result.

\end{proof}

\section{Normal forms of Riccati foliations}
Now we prove
Theorems~\ref{Theorem:ClassificacaoRiccatiPotenciasEsferaRiemann}
and \ref{Theorem:ClassificacaoRiccatiEspacoProjetivo}.

\begin{proof}[Proof of
Theorem~\ref{Theorem:ClassificacaoRiccatiPotenciasEsferaRiemann}]

We consider a singular holomorphic foliation $\fa$ on $\ov\bc
\times\ov\bc^n$ given by a polynomial vector field $X$ in affine
coordinates $(x,y)\in \C \times\C^n \hookrightarrow \ov\bc
\times\ov\bc^n$ and assume that  $\fa$ is transverse to almost
every fiber of $\eta$. Write $
X(x,y)=P(x,y)\frac{\partial}{\partial x} +
Q_1(x,y)\frac{\partial}{\partial y_1} + \dots +
Q_n(x,y)\frac{\partial}{\partial y_n},$. If $\{x_0' \} \times
\ov\bc^n$ is not an invariant fiber by $\fa$, then $\fa$ by
compactness $\fa$ is transverse to $\{ x \} \times \ov\bc^n$,
$\forall x\approx x_0',$. Hence $P(x,y) \neq 0$, $\forall x\approx
x_0',$ $\forall y \in \C^n$ and thus $P(x,y)=p(x)$.

\begin{Claim}
We have $\deg_{y_n}(Q_n) \leq 2$ {\rm(}where $\deg_{y_n}(\cdot)$
denotes the
 degree with respect to the variable $y_n${\rm)}.
 \end{Claim}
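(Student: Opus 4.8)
The plan is to exploit the required transversality at the points of the fibre lying over $y_n=\infty$. I would write the dependence of $Q_n$ on the last variable as
\[
Q_n(x,y)=\sum_{j=0}^{d}a_j(x,y_1,\dots,y_{n-1})\,y_n^{\,j},
\]
where $d=\deg_{y_n}(Q_n)$ and $a_d\not\equiv 0$, and then pass to the affine chart of the last $\ov\bc$--factor centred at infinity by setting $w_n=1/y_n$ while keeping $(x,y_1,\dots,y_{n-1})$ unchanged. Since $\frac{\partial}{\partial y_n}=-w_n^2\frac{\partial}{\partial w_n}$, in these coordinates the last component of $X$ becomes
\[
-w_n^2\,Q_n(x,y_1,\dots,y_{n-1},1/w_n)=-\sum_{j=0}^{d}a_j\,w_n^{\,2-j},
\]
which has a pole of order $d-2$ along $\{w_n=0\}$ as soon as $d\ge 3$.

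Arguing by contradiction, suppose $d\ge 3$. I would set $m=\max\{\,d-2,\deg_{y_n}(Q_1),\dots,\deg_{y_n}(Q_{n-1})\,\}\ge 1$ and consider the holomorphic field $\widetilde X=w_n^{\,m}X$, which represents $\fa$ near $\{w_n=0\}$. The coefficient of $\frac{\partial}{\partial x}$ of $\widetilde X$ is then $p(x)\,w_n^{\,m}$, hence vanishes identically on $\{w_n=0\}$, while the coefficient realizing the maximum $m$ (for instance $-a_d$ in the $\frac{\partial}{\partial w_n}$--direction when $m=d-2$, or the leading $y_n$--coefficient of some $Q_i$ otherwise) is a nonzero polynomial in $(x,y_1,\dots,y_{n-1})$.

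Now fix $x_0$ outside the finite set of exceptional fibres together with a point $(x_0,y')$ at which this leading coefficient is nonzero; such points form a Zariski--dense subset of $\{w_n=0\}$. At $q=(x_0,y',w_n=0)$ the field $\widetilde X$ is holomorphic and nonzero, so the tangent line to $\fa$ at $q$ is $[\widetilde X(q)]$, and by the previous step this line has vanishing $\frac{\partial}{\partial x}$--component; i.e. it lies in the tangent space to the fibre $\{x_0\}\times\ov\bc^{\,n}$. This contradicts the hypothesis that $\fa$ is transverse to this generic fibre at each of its points. Hence $d\le 2$, which is the Claim; incidentally the same computation forces $\deg_{y_n}(Q_i)=0$ for $i<n$.

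The delicate point, and the step I expect to require the most care, is to rule out that reducing $\widetilde X$ to a local generator with codimension--two singular set could restore a nonzero $\frac{\partial}{\partial x}$--component at $q$. Here I would observe that the coefficient attaining the maximum $m$ does not vanish at $w_n=0$, so no positive power of $w_n$ divides all coefficients of $\widetilde X$; consequently any common factor is prime to $w_n$, and after dividing it out the $\frac{\partial}{\partial x}$--coefficient remains divisible by $w_n^{\,m}$, so it still vanishes on $\{w_n=0\}$. Everything else reduces to the elementary pole computation recorded above.
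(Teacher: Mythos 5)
Your proof is correct and follows essentially the same route as the paper's: pass to the chart $w_n=1/y_n$, clear the pole of order $d-2$ (together with any poles coming from the $Q_i$, $i<n$) by multiplying by $w_n^{m}$, and observe that the resulting holomorphic generator has $\frac{\partial}{\partial x}$--coefficient $p(x)w_n^{m}$ vanishing on $\{w_n=0\}$ while some other coefficient survives there, contradicting transversality to a generic fiber. Your version is in fact tidier than the paper's (which splits into three subcases according to whether $\deg_{w_n}(Q_1)$ is equal to, greater than, or less than $m$) and more careful on the point the paper leaves implicit, namely that dividing out a common factor of the coefficients cannot restore a nonzero $\frac{\partial}{\partial x}$--component along $\{w_n=0\}$.
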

\begin{proof} Suppose $\deg_{y_n}(Q_n)
> 2$  and write  $\deg_{y_n}(Q_n) =
m + 2$ for some $m\in \mathbb N$. The foliation $\fa$ is given by
the meromorphic vector field
$$
X = p \frac{\partial}{\partial x} + Q_1 \frac{\partial}{\partial
y_1} + \dots + Q_{n-1} \frac{\partial}{\partial y_{n-1}} - w_n^2
\frac{1}{w_n^{m+2}} \tilde{Q}_n \frac{\partial}{\partial w_n}
$$
in affine coordinates $(x,y_1,\dots,y_{n-1},w_n)\in \C \times
\C^n$ where $w_n=\frac{1}{y_n}$ and $\tilde{Q}_n$ is a polynomial
in $\C[x,y_1,\dots,y_{n-1},w_n].$ If we multiply it by $w^m_n$ ,
we obtain a polynomial vector field as
$$
\tilde X = w_n^m p \frac{\partial}{\partial x} + w_n^m Q_1
\frac{\partial}{\partial y_1} + \dots + w_n^m Q_{n-1}
\frac{\partial}{\partial y_{n-1}} + \tilde{Q}_n
\frac{\partial}{\partial w_n}
$$
and we obtain three possibilities:

\noindent {\bf 1}. if $\deg_{w_n}(Q_1)=m$, then
$$
\tilde X = w_n^m p \frac{\partial}{\partial x} + \tilde{Q}_1
\frac{\partial}{\partial y_1} + \dots + w_n^m Q_{n-1}
\frac{\partial}{\partial y_{n-1}} + \tilde{Q}_n
\frac{\partial}{\partial w_n},
$$
{\bf 2}. if $\deg_{w_n}(Q_1) > m$, that is, $\exists l \in
\mathbb{N}$ such that $\deg_{w_n}(Q_1) = m+l$, then
$$
\tilde X = w_n^m p \frac{\partial}{\partial x} + w_n^m
\frac{1}{w_n^{m+l}}\tilde{Q}_1 \frac{\partial}{\partial y_1} +
\dots + w_n^m Q_{n-1} \frac{\partial}{\partial y_{n-1}} +
\tilde{Q}_n \frac{\partial}{\partial w_n}.
$$
We multiply it by $w_n^l$ and obtain a new polynomial vector field
$$
\tilde{\tilde X} = w_n^{m+l} p \frac{\partial}{\partial x} +
\tilde{Q}_1 \frac{\partial}{\partial y_1} + \dots + w_n^{m+l}
Q_{n-1} \frac{\partial}{\partial y_{n-1}} + w_n^l \tilde{Q}_n
\frac{\partial}{\partial w_n},
$$
{\bf 3}. If $\deg_{w_n}(Q_1) < m$, that is, $\exists l \in
\mathbb{N}$ such that $\deg_{w_n}(Q_1) = m-l$, then
$$
\tilde X = w_n^m p \frac{\partial}{\partial x} + w_n^l \tilde{Q}_1
\frac{\partial}{\partial y_1} + \dots + w_n^m Q_{n-1}
\frac{\partial}{\partial y_{n-1}} + \tilde{Q}_n
\frac{\partial}{\partial w_n}
$$
where $\tilde{Q}_1$, is a polynomial in
$\C[x,y_1,\dots,y_{n-1},w_n].$ In all these cases we use the same
ideas for all $Q_j$ with $j=2, \dots, n-1$ and it implies a
polynomial vector field $X$ without poles in affine coordinates
$(x,y_1,\dots,y_{n-1},w_n)\in \C \times \C^n \hookrightarrow
\ov\bc \times \ov\bc^n$ such that $\fa$ is tangent to the fiber
$\{x_0\} \times \ov\bc^n$ in the set $\{ x=x_0,\ w_n=0 \}$, and so
it is a contradiction. \end{proof}

Therefore
$$
X = p \frac{\partial}{\partial x} + Q_1 \frac{\partial}{\partial
y_1} + \dots + Q_{n-1} \frac{\partial}{\partial y_{n-1}} + q
\frac{\partial}{\partial y_n}
$$
where $q = q_{n,2} y_n^2 + q_{n,1} y_n + q_{n,0}$ with $q_{n,2},
q_{n,1}, q_{n,0} \in \C[x,y_1,\dots,y_{n-1}]$.

Now observe that $\deg_{y_n}(Q_j)=0$ for all $j=1,\dots, n-1$. In
fact, assume that $\deg_{y_n}(Q_j)>0$ so there exists $l_j \in
\mathbb{N}$ such that $\deg_{y_n}(Q_j)=l_j,$ then
$$
X = p \frac{\partial}{\partial x} + Q_1 \frac{\partial}{\partial
y_1} + \cdots + Q_{n-1} \frac{\partial}{\partial y_{n-1}} -
\tilde{q} \frac{\partial}{\partial w_n}
$$
where $\tilde{q} = q_{n,2} + q_{n,1} w_n + q_{n,0} w_n^2$ with
$q_{n,2}$, $q_{n,1}$ and $q_{n,0}$ are polynomials in
$\C[x,y_1,\dots,y_{n-1}]$. Let $m = \max\{\deg_{y_n}(Q_j);\
j=1,\dots,n-1\}.$ Suppose $m = \deg_{y_n}(Q_1)$. We multiply the
vector field $X$ by $w^m_n$ so we obtain the vector field
$$
X = w_n^m p \frac{\partial}{\partial x} + \widetilde{Q}_1
\frac{\partial}{\partial y_1} + w_n^{\alpha_2} \widetilde{Q}_2
\frac{\partial}{\partial y_2} + \cdots + w_n^{\alpha_{n-1}}
\widetilde{Q}_{n-1} \frac{\partial}{\partial y_{n-1}} - w_n^m
\tilde{q} \frac{\partial}{\partial w_n}
$$
with $\alpha_2 = m - \deg_{y_n}(Q_2)$ and $\alpha_{n-1} = m -
\deg_{y_n}(Q_{n-1})$, where $\widetilde{Q}_j$ are polynomials in
$\C[x,y_1,\dots,y_{n-1},w_n]$ and $q_{n,k}$ are polynomials in
$\C[x,y_1,\dots,y_{n-1}]$. We obtain $\fa$ is tangent to $\{x_0\}
\times \ov\bc^n$ in the set $\{ x=x_0,\ w_n=0 \}$ so we have a
contradiction. We obtain
$$
X = p \frac{\partial}{\partial x} + Q_1 \frac{\partial}{\partial
y_1} + \dots + Q_{n-1} \frac{\partial}{\partial y_{n-1}} + q
\frac{\partial}{\partial y_n}
$$
where $q = q_{n,2} y_n^2 + q_{n,1} y_n + q_{n,0}$ with $q_{n,2}$,
$q_{n,1}$, $q_{n,0}$, $Q_1$, $\dots$, $Q_{n-1}$ polynomials in
$\C[x,y_1,\dots,y_{n-1}]$ and $p$ is polynomial in $\C[x]$. By
analogy with it we use the affine coordinates $(x,y_1,\dots,
w_k,\dots,y_{n-1},y_n) \in \C^{n+1} \hookrightarrow \ov\bc \times
\ov\bc^n,$ with $w_{k}=\frac{1}{y_{k}},$ $k \in \{1,\dots,n-1\}$
and we obtain $\deg_{y_{k}}(Q_{k})\leq 2$, $\deg_{y_{k}}(Q_j) = 0$
for every $j \in \{1,\dots,n\}\smallsetminus\{k\}.$ Therefore
$$
X = p \frac{\partial}{\partial x} + Q_1 \frac{\partial}{\partial
y_1} + \cdots + Q_n \frac{\partial}{\partial y_n}
$$
in affine coordinates $(x,y)\in \C\times\C^n \hookrightarrow
\ov\bc \times \ov\bc^n,$ $y=(y_1,\dots,y_n),$ where $Q_j =
q_{j,2}(x)y_j^2 + q_{j,1}(x)y_j + q_{j,0}(x)$ with $q_{j,2}$,
$q_{j,1}$, $q_{j,0}$, $p$ polynomials in $\C[x]$. And we prove
Theorem~\ref{Theorem:ClassificacaoRiccatiPotenciasEsferaRiemann}.
\end{proof}

\begin{Corollary}\label{Corollary:Scardua}
Let $\fa$ be a singular holomorphic foliation on $\ov\bc \times
\ov\bc^n$ given by a polynomial vector field $X$ in affine
coordinates on $\C \times \C^{n}.$ Suppose $\fa$ is transverse at
least one fiber $\{x_0\} \times\ov\bc^n$ of $\eta.$ Then $\fa$ is
a Riccati foliation on $\ov\bc \times \ov\bc^n.$
\end{Corollary}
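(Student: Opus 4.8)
The plan is to deduce the corollary directly from Theorem~\ref{Theorem:ClassificacaoRiccatiPotenciasEsferaRiemann} by upgrading the single transversal fiber to transversality to almost every fiber, which is exactly the hypothesis of that theorem. Writing $X = P(x,y)\frac{\partial}{\partial x} + \sum_{j=1}^{n} Q_j \frac{\partial}{\partial y_j}$, I would first record that transversality of $\fa$ to the compact fiber $\{x_0\} \times \ov\bc^n$ forbids any singularity of $\fa$ on that fiber and forces its vertical component to be nowhere tangent to it; in particular the fiber is not $\fa$-invariant. At the finite points this says $P(x_0,\cdot)$ is a polynomial on $\C^n$ with no zero, hence a nonzero constant, exactly as in the opening lines of the proof of Theorem~\ref{Theorem:ClassificacaoRiccatiPotenciasEsferaRiemann}.

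Next I would promote transversality from the single fiber to a whole base-neighborhood of $x_0$. Let $\mathcal{T} \subset \ov\bc \times \ov\bc^n$ be the locus of points at which $\fa$ is either singular or tangent to the vertical fibration; as $\fa$ is a singular holomorphic foliation on the compact manifold $\ov\bc \times \ov\bc^n$, the set $\mathcal{T}$ is closed and analytic. By hypothesis $\mathcal{T}$ is disjoint from the compact fiber $\{x_0\} \times \ov\bc^n$, so by compactness there is $\varepsilon > 0$ with $\mathcal{T} \cap \bigl( \{|x-x_0|<\varepsilon\} \times \ov\bc^n \bigr) = \emptyset$. Hence $\fa$ is transverse to $\{x\} \times \ov\bc^n$ for every $x$ near $x_0$, and the nonvanishing of $P(x,\cdot)$ on $\C^n$ for all such $x$ forces $P(x,y)=p(x)$ to be independent of $y$, again reproducing the first step of the theorem's proof.

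Then I would identify the exceptional set. Since the fiber $\ov\bc^n$ is compact, the projection $\pi$ is proper, so by the proper mapping theorem $\pi(\mathcal{T})$ is an analytic subset of $\ov\bc$; it is a \emph{proper} subset because $x_0 \notin \pi(\mathcal{T})$, and a proper analytic subset of the curve $\ov\bc$ is finite. Therefore $\fa$ is transverse to every fiber off the finite union of fibers $\Lambda = \pi^{-1}(\pi(\mathcal{T}))$, which is an analytic set and a union of fibers; that is, $\fa$ is transverse to almost every fiber of $\eta$. At this point Theorem~\ref{Theorem:ClassificacaoRiccatiPotenciasEsferaRiemann} applies verbatim and yields both that $\fa$ is a Riccati foliation and the stated normal form, which is precisely the assertion of the corollary.

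I expect the only genuine obstacle to be the passage from ``transverse to one fiber'' to ``transverse to a full base-neighborhood'': one must check that the tangency/singularity locus $\mathcal{T}$ is honestly closed and analytic on the compactified product and disjoint from the whole compact fiber, so that the openness of transversality is uniform in the $y$-direction, \emph{including the points at infinity of $\ov\bc^n$}. Once that is in place, the remainder is an application of the proper mapping theorem together with a direct appeal to the already-proved Theorem~\ref{Theorem:ClassificacaoRiccatiPotenciasEsferaRiemann}, so no new computation is needed.
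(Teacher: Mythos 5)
Your argument is correct, and its first half coincides with the paper's: both exploit the compactness of the fiber $\{x_0\}\times\ov\bc^n$ to upgrade transversality at one fiber to transversality over a full disk $D_\varepsilon(x_0)$ in the base, and both extract $P(x,y)=p(x)$ from the nonvanishing of a polynomial on $\C^n$. Where you diverge is in how Theorem~\ref{Theorem:ClassificacaoRiccatiPotenciasEsferaRiemann} is brought to bear. The paper stops at the disk $D_\varepsilon(x_0)$, observes that the degree-bounding arguments in the proof of that theorem (the contradictions produced at $\{x=x_0,\ w_n=0\}$) only ever use transversality at a single fiber, runs them in this local situation to get $\deg_{y_n}(Q_n)\leq 2$ and $\deg_{y_n}(Q_j)=0$ over the disk, and then invokes the identity theorem to propagate the vanishing of the relevant derivatives of the polynomial coefficients to all of $\ov\bc\times\ov\bc^n$. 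You instead complete the reduction of the hypotheses: the tangency-plus-singularity locus $\mathcal{T}$ is the zero set of $d\pi$ applied to a local holomorphic generator of $\fa$, hence closed and analytic; it misses the compact fiber over $x_0$; and Remmert's proper mapping theorem makes $\pi(\mathcal{T})$ a proper analytic, hence finite, subset of $\ov\bc$, so that $\fa$ is literally transverse to almost every fiber and Theorem~\ref{Theorem:ClassificacaoRiccatiPotenciasEsferaRiemann} applies with its stated hypothesis verified. Your route is logically cleaner -- the theorem is cited as a black box rather than ``applied'' under weaker hypotheses with its proof implicitly re-run -- at the price of importing the proper mapping theorem; the paper's route is more elementary and computational but requires the reader to check that the proof, not just the statement, of the theorem localizes. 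Both are valid, and your closing caveat about verifying analyticity and closedness of $\mathcal{T}$ at the points at infinity of $\ov\bc^n$ is exactly the point that needs (and admits) the standard justification.
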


\begin{proof}
We consider
$$
X(x,y)=P(x,y)\frac{\partial}{\partial
x}+Q_1(x,y)\frac{\partial}{\partial
y_1}+\cdots+Q_n(x,y)\frac{\partial}{\partial y_n}
$$
in affine coordinates $(x,y) \in \C \times \C^n \hookrightarrow
\ov\bc \times \ov\bc^n,$ $y=(y_1,\dots,y_n)$ where $P$, $Q_j$ are
polynomials in $\C[x,y].$ If $\fa$ is transverse to $\{x_0\}\times
\ov\bc^n,$ then $P(x,y)=p(x),$ $\forall x \in \C.$ We use the
transversality and it implies that there exists $\varepsilon > 0$
such that $\fa$ is transverse to $\{x_0\} \times \ov\bc^n,$
$\forall x \in D_{\varepsilon}(x_0).$ Now we use the
Theorem~\ref{Theorem:ClassificacaoRiccatiPotenciasEsferaRiemann}.
It implies that $\deg_{y_n}(Q_n) \leq 2$ and $\deg_{y_n}(Q_j)=0$,
$j \in \{ 1, \dots, n-1 \}.$ Then $\frac{\partial^k Q_j}{\partial
y_n^k}=0$ and $\frac{\partial Q_n}{\partial y_n}=0,$ $\forall k
\geq 3,$ $\forall x \in  D_{\varepsilon}(x_0).$ Now we use the
identity theorem \cite{gunning1} and obtain $\frac{\partial^k
Q_j}{\partial y_n^k}=0$ and $\frac{\partial Q_n}{\partial y_n}=0$
in $\ov\bc \times \ov\bc^n$ for every $k \geq 3.$ By analogy with
this we conclude the proof.
\end{proof}

\begin{proof}[Proof of
Theorem~\ref{Theorem:ClassificacaoRiccatiEspacoProjetivo}] Suppose
$\fa$ is a singular holomorphic foliation on $\ov\bc \times \bc
P(2)$ given by a polynomial vector field
$X(x,y,z)=P(x,y,z)\frac{\partial}{\partial
x}+Q(x,y,z)\frac{\partial}{\partial
y}+R(x,y,z)\frac{\partial}{\partial z}$ in affine coordinates
$(x,y,z)\in \C \times \C^2 \subset \ov\bc \times \bc P(2).$ and
transverse to almost every fiber of $\eta$. Let $\{x_0\}\times \bc
P(2)$ be a fiber transverse to  $\fa$. Then $\fa$ is transverse to
$\{x\} \times \bc P(2),$ $\forall x$ in a neighborhood of $x_0.$
This implies $P(x,y,z)=p(x),$ $\forall (x,y,z) \in \C \times
\C^2.$ On the other hand,
$$
X(x,u,v)=p(x)\frac{\partial}{\partial x}-u^2
Q(x,\frac{1}{u},\frac{v}{u})\frac{\partial}{\partial u}+ ( u
R(x,\frac{1}{u},\frac{v}{u}) -uv Q(x,\frac{1}{u},\frac{v}{u})
)\frac{\partial}{\partial v}.
$$
in affine coordinates $(x,u,v)\in\C \times\C^2 \hookrightarrow
\ov\bc \times\bc P(2)$ with $u=\frac{1}{y}$ and $v=\frac{z}{y}.$
Therefore
$$
X(x,u,v) = p(x)\frac{\partial}{\partial x}-u^2 \frac{1}{u^\alpha}
\widetilde{Q}(x,u,v)\frac{\partial}{\partial u} + ( u
\frac{1}{u^\beta} \widetilde{R}(x,u,v) -uv \frac{1}{u^{\alpha}}
\widetilde{Q}(x,u,v) )\frac{\partial}{\partial v}
$$
where $\widetilde{Q}$, $\widetilde{R}$ are polynomials in
$\C[x,u,v], \, \,  \alpha=\max\{m+n;
Q(x,y,z)=\displaystyle\sum_{l,m,n}q_{l,m,n}x^l y^m z^n\} $ and
$\beta=\max\{m+n; R(x,y,z)=\displaystyle\sum_{l,m,n}r_{l,m,n}x^l
y^m z^n\}.$ If $\{x_1\} \times \bc P(2)$ is a fiber transverse to
$\fa,$ then $\beta \leq \alpha.$ In fact, if $\beta > \alpha,$
then
$$
X(x,u,v) = p(x)\frac{\partial}{\partial x}-u^2 \frac{1}{u^\alpha}
\widetilde{Q}(x,u,v)\frac{\partial}{\partial u} +
\frac{1}{u^{\beta-1}}(\widetilde{R}(x,u,v) -u^{\beta - \alpha}v
\widetilde{Q}(x,u,v) )\frac{\partial}{\partial v},
$$
and we recall that $\beta > \alpha$ implies $\beta > 1.$ We
multiply the vector field $X$ by $u^{\beta-1}$  to obtain
$$
X(x,u,v) = u^{\beta-1}p(x)\frac{\partial}{\partial x}-u
u^{\beta-\alpha} \widetilde{Q}(x,u,v)\frac{\partial}{\partial u} +
(\widetilde{R}(x,u,v) -u^{\beta - \alpha}v \widetilde{Q}(x,u,v)
)\frac{\partial}{\partial v}.
$$
We observe that $X$ is not transverse to $\{ x_1 \} \times \C^2$
at the point $(x_1, 0,v_0),$ because if
$q(v)=\widetilde{R}(x_1,0,v)$ is the zero polynomial in $\bc [v],$
then $(x_1, 0,v_0)$ is a singularity of $X.$ Otherwise $q(v)$ is a
polynomial in $\bc [v] \smallsetminus \{ 0 \}$ and the point
$(x_1, 0,v_0)$ is a singularity of $X$ or $X$ is tangent to $\{
x_1 \} \times \bc P(2)$ at this point if $v_0$ is a zero of $q(v)$
or it is not. Therefore $\beta \leq \alpha,$ and
$$
X = p(x)\frac{\partial}{\partial x}-u^2 \frac{1}{u^\alpha}
\widetilde{Q}(x,u,v)\frac{\partial}{\partial u} +
\frac{1}{u^{\alpha}}( u u^{\alpha-\beta} \frac{1}{u^\beta}
\widetilde{R}(x,u,v) -uv \widetilde{Q}(x,u,v)
)\frac{\partial}{\partial v}.
$$
in affine coordinates $(x,u,v)\in\C \times\C^2 \hookrightarrow
\ov\bc \times\bc P(2).$ Notice that $\alpha \leq 2.$ In fact, if
$\alpha > 2,$ that is, there is $k \in \mathbb{Z} \smallsetminus
\{ -1,-2,\dots \}$ such that $\alpha= 3+k.$ Then
$$
X(x,u,v) = p(x)\frac{\partial}{\partial x}- \frac{1}{u^{k+1}}
\widetilde{Q}(x,u,v)\frac{\partial}{\partial u} +
\frac{1}{u^{k+2}}( u^{\alpha-\beta} \widetilde{R}(x,u,v) -v
\widetilde{Q}(x,u,v) )\frac{\partial}{\partial v}
$$
and we multiply it by $u^{k+1}$
$$
X(x,u,v)= u^{k+1} p(x)\frac{\partial}{\partial x}-
\widetilde{Q}(x,u,v)\frac{\partial}{\partial u} +
\frac{1}{u}(u^{\alpha-\beta} \widetilde{R}(x,u,v) -v
\widetilde{Q}(x,u,v) )\frac{\partial}{\partial v}.
$$
We obtain two possibilities: the polynomial $S$ in $\C[x,u,v]$
defined by
$$
S(x,u,v)=u^{\alpha-\beta} \widetilde{R}(x,u,v) -v
\widetilde{Q}(x,u,v)
$$
is multiple of $u$ or it is not.

{\bf Case (i).} If $S$ is multiple of $u,$ then $\fa$ is given by
the holomorphic vector field
$$
X(x,u,v)= u^{k+1} p(x)\frac{\partial}{\partial x}-
\widetilde{Q}(x,u,v)\frac{\partial}{\partial u} +
(u^{\alpha-\beta} \widetilde{R}(x,u,v) -v \widetilde{Q}(x,u,v)
)\frac{\partial}{\partial v}.
$$
and observe that $X$ is not transverse to $\{x_1\} \times \bc
P(2)$ in $(x_1, 0,v_0).$

{\bf Case (ii).} If $S$ is not multiple of $u,$ then $\fa$ is
given by the vector field without poles
$$
X(x,u,v)= u^{k+2} p(x)\frac{\partial}{\partial x}- u
\widetilde{Q}(x,u,v)\frac{\partial}{\partial u} + S(x,u,v)
\frac{\partial}{\partial v}
$$
and observe that $X$ is not transverse to $\{x_1\} \times \bc
P(2)$ at $(x_1, 0,v_0),$ either. Then it is a contradiction in
both  cases. Recall that $\beta \leq \alpha \leq 2.$ We obtain the
following possibilities:

{\bf Possibility 1.} If $\alpha=\beta=0,$ then
\begin{equation} \label{Equacao:fim1}
X(x,y,z)=p(x)\frac{\partial}{\partial  x}+
q(x)\frac{\partial}{\partial y}+ r(x)\frac{\partial}{\partial z}.
\end{equation}
in affine coordinates $(x,y,z)\in \C^3 \hookrightarrow \ov\bc
\times\bc P(2).$

{\bf Possibility 2.} If $\alpha=1,\ \beta=0,$ then
\begin{equation} \label{Equacao:fim2}
X(x,y,z)=p(x)\frac{\partial}{\partial  x}+
(A(x)+B(x)y+C(x)z)\frac{\partial}{\partial y}+
r(x)\frac{\partial}{\partial z},
\end{equation}
where $A,B,C \in \C[x].$

{\bf Possibility 3.} Se $\alpha=\beta=1,$ then
\begin{equation} \label{Equacao:fim3}
X(x,y,z)=p(x)\frac{\partial}{\partial  x}+
(A(x)+B(x)y+C(x)z)\frac{\partial}{\partial y}+
(a(x)+b(x)y+c(x)z)\frac{\partial}{\partial z},
\end{equation}
where $a,A,b,B,c,C \in \C[x].$

Observe that the foliation $\fa$ given by the vector field $X$
defined by equation~\ref{Equacao:fim3} (\ref{Equacao:fim1} or
~\ref{Equacao:fim2} respectively) is transverse to almost every
fiber of $\eta.$ In fact, we obtain
$$
X(x,u,v)= p(x)\frac{\partial}{\partial x}-u
(A(x)u+C(x)v+B(x))\frac{\partial}{\partial u} +
\tilde{R}(x,u,v)\frac{\partial}{\partial v},
$$
in affine coordinates $(x,u,v)\in \C^3\hookrightarrow \ov\bc
\times\bc P(2)$ where
$\tilde{R}(x,u,v)=a(x)u+c(x)v+b(x)-v(A(x)u+C(x)v+B(x))$ and so
$\fa$ is transverse to $\{x\}\times \bc P(2)$ if $p(x) \neq 0.$ On
the other hand, we obtain
$$
X(x,t,s)= p(x)\frac{\partial}{\partial x} +
\tilde{Q}(x,t,s)\frac{\partial}{\partial t}
-s(a(x)s+b(x)t+c(x))\frac{\partial}{\partial s}
$$
with $\tilde{Q}(x,t,s)=A(x)s+B(x)t+C(x)-t(a(x)s+b(x)t+c(x))$ in
affine coordinates $(x,t,s)\in \C^3 \hookrightarrow \C \times \bc
P(2)$ where $s=\frac{1}{z}$ and $t=\frac{y}{z}.$ In this case
$\fa$ is transverse to $\{x\}\times \bc P(2)$ if $p(x) \neq 0.$

{\bf Possibility 4.} If $\alpha=2,\ \beta=0,$ then
\begin{equation} \label{Equacao:fim4}
X(x,y,z)=p(x)\frac{\partial}{\partial x}+
Q(x,y,z)\frac{\partial}{\partial y}+ r(x)\frac{\partial}{\partial
z},
\end{equation}
where $Q(x,y,z)=A(x)+B(x)y+C(x)z+D(x)yz+E(x)y^2+F(x)z^2$ and $A,
B,$ $ C, D,$ $E, F\in \C[x].$

We obtain
$$
X(x,u,v)= p(x)\frac{\partial}{\partial x}-
\widetilde{Q}(x,u,v)\frac{\partial}{\partial u} + (u\  r(x) -
\frac{v}{u}\  \widetilde{Q}(x,u,v) )\frac{\partial}{\partial v},
$$
where $\tilde{Q}(x,u,v)=A(x)u^2+F(x)v^2+C(x)uv+D(x)v+B(x)u+E(x).$
Recall that $D, E$ or $F$ is not the zero polynomial. Then
$\tilde{Q}$ is not multiple of $u.$ Moreover the polynomial
$S(x,u,v)=u^2\ r(x) - v\ \widetilde{Q}(x,u,v)$ is not the zero
polynomial. We multiply the vector field $X$ by $u$ and obtain
$$
X(x,u,v)= u\ p(x)\frac{\partial}{\partial x} - u\
\widetilde{Q}(x,u,v)\frac{\partial}{\partial u} +
S(x,u,v)\frac{\partial}{\partial v}.
$$
Notice that $X$ is not transverse tov $\{x_1\} \times \C^2$ in
$(x_1, 0,v_0).$ This is a contradiction. Thus possibility 4 does
not occur.

{\bf Possibility 5.} If $\alpha=2,\ \beta=1,$ then
\begin{equation} \label{Equacao:fim5}
X(x,y,z)=p(x)\frac{\partial}{\partial x}+
Q(x,y,z)\frac{\partial}{\partial y}+
R(x,y,z)\frac{\partial}{\partial z},
\end{equation}
where $Q(x,y,z)=A(x)+B(x)y+C(x)z+D(x)yz+E(x)y^2+F(x)z^2,$
$R(x,y,z)=a(x)+b(x)y+c(x)z$ e $a, b, c, A, B,$ $ C, D,$ $E, F \in
\C[x].$ Then
$$
X(x,u,v)= p(x)\frac{\partial}{\partial x}-
\widetilde{Q}(x,u,v)\frac{\partial}{\partial u} + \frac{1}{u}(u\
\tilde{R}(x,u,v) - v\  \widetilde{Q}(x,u,v)
)\frac{\partial}{\partial v},
$$
where $\tilde{Q}(x,u,v)=A(x)u^2+F(x)v^2+C(x)uv+D(x)v+B(x)u+E(x)$ e
$\tilde{R}=a(x)u+c(x)v+b(x),$ in affine coordinates $(x,u,v)\in\C
\times\C^2 \hookrightarrow \ov\bc \times \bc P(2).$ Observe that
$D, E$ or $ F$ is not the zero polynomial. Therefore $\tilde{Q}$
is not multiple of $u,$ and the polynomial $S(x,u,v)=u\
\tilde{R}(x,y,z) - v\ \widetilde{Q}(x,u,v)$ is not the zero
polynomial. We multiply $X$ by $u$ and obtain
$$
X(x,u,v)= u\ p(x)\frac{\partial}{\partial x} - u\
\widetilde{Q}(x,u,v)\frac{\partial}{\partial u} +
S(x,u,v)\frac{\partial}{\partial v},
$$
and observe that $X$ is not transverse to $\{x_1\} \times \C^2$ in
$(x_1, 0,v_0).$ Then it is a contradiction and possibility 5 does
not occur.

{\bf Possibility 6.} If $\alpha=\beta=2,$ then
\begin{equation} \label{Equacao:fim6}
X(x,y,z)=p(x)\frac{\partial}{\partial x}+
Q(x,y,z)\frac{\partial}{\partial y}+
R(x,y,z)\frac{\partial}{\partial z},
\end{equation}
where $Q(x,y,z)=A(x)+B(x)y+C(x)z+D(x)yz+E(x)y^2+F(x)z^2,$
$R(x,y,z)=a(x)+b(x)y+c(x)z+d(x)yz+e(x)y^2+f(x)z^2$ and $a, b, c,$
$d, e, f,$ $A, B,$ $ C, D,$ $E, F\in \C[x].$ Then
$$
X(x,u,v)= p(x)\frac{\partial}{\partial x}-
\widetilde{Q}(x,u,v)\frac{\partial}{\partial u} +
\frac{1}{u}(\tilde{R}(x,u,v) - v\  \widetilde{Q}(x,u,v)
)\frac{\partial}{\partial v},
$$
in affine coordinates $(x,u,v)\in\C \times\C^2 \hookrightarrow
\ov\bc \times \bc P(2)$ where
$\tilde{Q}(x,u,v)=A(x)u^2+F(x)v^2+C(x)uv+D(x)v+B(x)u+E(x)$ and
$\tilde{R}=a(x)u^2+f(x)v^2+c(x)uv+d(x)v+b(x)u+e(x).$ Notice that
none of the polynomials  $D, E$ or $F\in \C[x]$ is identically
zero  and the same holds for  the polynomials $d, e$ or $f \in
\C[x].$ Therefore the polynomial $S(x,u,v)=\tilde{R}(x,u,v) - v\
\widetilde{Q}(x,u,v)$ is a multiple of $u$ if and only if $e=0$,
$d=E,$ $f=D$ and $F=0.$ Then the foliation $\fa$ is given by $X$
is transverse to $\{x\} \times \bc P(2)$ if $p(x)\neq 0.$

On the other hand
$$
X(x,t,s)= p(x)\frac{\partial}{\partial x}+
\frac{1}{s}(\tilde{Q}(x,t,s)-t\tilde{R}(x,t,s))
\frac{\partial}{\partial t} -
\widetilde{Q}(x,t,s)\frac{\partial}{\partial s},
$$
in affine coordinates $(x,t,s)\in \C \times \C^2 \hookrightarrow
\ov\bc \times \bc P(2)$ with $t=\frac{y}{z}$ e $s=\frac{1}{z}$
where
$$
\widetilde{Q}(x,t,s)=A(x)s^2+E(x)t^2+B(x)ts+C(x)s+D(x)t
$$
and
$$
\tilde{R}(x,t,s)=a(x)s^2+b(x)ts+c(x)s+E(x)t+D(x).
$$
Therefore
$$
\tilde{Q}(x,t,s)-t\tilde{R}(x,t,s)=-ts^2a(x)-t^2sb(x)+ts(B-c)(x)+s^2A(x)+sC(x),
$$
and the foliation $\fa$ is given by the vector field without poles
$$
X(x,t,s)= p(x)\frac{\partial}{\partial x}+ U(x,t,s)
\frac{\partial}{\partial t} -
\widetilde{Q}(x,t,s)\frac{\partial}{\partial s},
$$
with $U(x,t,s)=-ts\ a(x)-t^2\ b(x)+t(B-c)(x)+s\ A(x)+C(x),$ and it
is transverse to $\{x\}\times \bc P(2)$ if $p(x)\neq 0.$

If $S$ is not multiple of $u,$ then the foliation $\fa$ is given
by the vector field without poles
$$
X(x,u,v)= u\ p(x)\frac{\partial}{\partial x} -u\
\widetilde{Q}(x,u,v)\frac{\partial}{\partial u} +
S(x,u,v)\frac{\partial}{\partial v},
$$
and it is not transverse to $\{x_1\} \times \C^2$ in $(x_1, 0,
v_0).$  We obtain a contradiction. This ends the proof of
Theorem~\ref{Theorem:ClassificacaoRiccatiEspacoProjetivo}.
\end{proof}

The above proof indeed gives:
\begin{Corollary}
Let $\fa$ be a singular holomorphic foliation on  $\ov\bc \times
\bc P(2)$ given by a polynomial vector field $X$ in affine
coordinates on $\C \times \C^2.$ Suppose $\fa $ is transverse at
least one fiber $\{x_0\} \times \bc P(2)$ of $\eta.$ Then $\fa$ is
a Riccati foliation on $\ov\bc \times \bc P(2).$
\end{Corollary}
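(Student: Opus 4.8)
The plan is to mimic the proof of Corollary~\ref{Corollary:Scardua}, reducing the present statement to the hypotheses of Theorem~\ref{Theorem:ClassificacaoRiccatiEspacoProjetivo} by exploiting the fact that transversality is an open condition. Write $X = P\frac{\partial}{\partial x} + Q\frac{\partial}{\partial y} + R\frac{\partial}{\partial z}$ with $P,Q,R \in \bc[x,y,z]$. The first step is to extract from transversality at the single fiber $\{x_0\}\times\bc P(2)$ an entire disk of transverse fibers. Since $\bc P(2)$ is compact and transversality is an open condition, there is $\varepsilon>0$ such that $\fa$ is transverse to $\{x\}\times\bc P(2)$ for every $x\in D_\varepsilon(x_0)$. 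In particular $P(x,y,z)\neq 0$ for all $(y,z)\in\C^2$ whenever $x\in D_\varepsilon(x_0)$; a polynomial in $(y,z)$ with no zeros being constant, we get $P(x,y,z)=p(x)$ for $x\in D_\varepsilon(x_0)$, and hence $P=p(x)$ identically by the identity theorem \cite{gunning1}.

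Second, I would run the degree estimates of the proof of Theorem~\ref{Theorem:ClassificacaoRiccatiEspacoProjetivo} verbatim, now using the fibers $\{x_1\}\times\bc P(2)$ with $x_1\in D_\varepsilon(x_0)$ as the transverse fibers to which that argument appeals. Passing to the charts $(x,u,v)$ with $u=\frac{1}{y}$, $v=\frac{z}{y}$ and $(x,t,s)$ with $t=\frac{y}{z}$, $s=\frac{1}{z}$, the failure of transversality at the points $(x_1,0,v_0)$ and $(x_1,0,t_0)$ of the lines at infinity forces the same bounds $\beta\leq\alpha\leq 2$ and the same vanishing of the offending top-degree coefficients. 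Each such constraint is the vanishing of a coefficient polynomial for every $x$ in the open set $D_\varepsilon(x_0)$, so by the identity theorem it holds for all $x\in\C$, and $X$ thereby acquires the normal form of Theorem~\ref{Theorem:ClassificacaoRiccatiEspacoProjetivo}.

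Third, with the normal form in hand I would read off transversality to almost every fiber exactly as in the final computations of that proof: in the two affine charts covering the line at infinity the vector field extends holomorphically without poles and is transverse to $\{x\}\times\bc P(2)$ whenever $p(x)\neq 0$. Hence the exceptional set $\Lambda(\fa)$ is contained in the finite union of fibers over the zeros of $p$ together with the fiber over $\infty$. Finally each of these fibers is $\fa$-invariant: over a zero of $p$ the $\frac{\partial}{\partial x}$-component of $X$ vanishes, so $X$ is tangent to the fiber, and the same tangency computation carried out in the coordinate $w=\frac{1}{x}$ shows the fiber over $\infty$ is invariant as well. Therefore $\Lambda(\fa)$ is $\fa$-invariant and $\fa$ is a Riccati foliation.

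I expect the one delicate point to be the passage from the local identities valid on $D_\varepsilon(x_0)$ to global ones: one must check that each condition produced by the transversality argument is genuinely a vanishing of polynomial coefficients, rather than a condition imposed only at isolated points, so that the identity theorem truly applies. Once this bookkeeping is settled, no idea beyond the proof of Theorem~\ref{Theorem:ClassificacaoRiccatiEspacoProjetivo} is needed.
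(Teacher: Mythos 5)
Your proposal is correct and follows essentially the paper's own route: the paper offers no separate argument beyond the remark that ``the above proof indeed gives'' the corollary, and its proof of the analogous Corollary~\ref{Corollary:Scardua} for $\ov\bc\times\ov\bc^n$ is exactly your reduction --- openness of transversality near the compact fiber $\{x_0\}\times\bc P(2)$, the degree/pole analysis of Theorem~\ref{Theorem:ClassificacaoRiccatiEspacoProjetivo} applied at a transverse fiber $x_1\in D_\varepsilon(x_0)$, and the identity theorem to globalize. Your added verification that the fibers over the zeros of $p$ and over $\infty$ are invariant is a harmless (and welcome) supplement to what the paper leaves implicit.
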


\leftline{Instituto de Matem\'atica} \leftline{Universidade
Federal do Rio de Janeiro} \leftline{Caixa Postal 68530}
\leftline{CEP. 21945-970 Rio de Janeiro - RJ} \leftline{BRASIL}
%\leftline{scardua@impa.br}

\end{document}